\documentclass[12pt]{amsart}
\pdfoutput=1
\usepackage{microtype}


\usepackage{amssymb}
\usepackage[cmtip,all]{xy}
\usepackage{verbatim}
\usepackage{upref}
\usepackage{hyperref}
\usepackage[usenames,dvipsnames]{color}
\usepackage{url}
\usepackage[normalem]{ulem}

\newtheorem{thm}{Theorem}[section]
\newtheorem*{thm*}{Theorem}
\newtheorem{lem}[thm]{Lemma}
\newtheorem{cor}[thm]{Corollary}
\newtheorem{prop}[thm]{Proposition}

\theoremstyle{definition} 
\newtheorem{defn}[thm]{Definition}

\newtheorem{q}[thm]{Question}
\newtheorem{rem}[thm]{Remark}

\numberwithin{equation}{section}

\allowdisplaybreaks

\newcommand{\secref}[1]{Section~\textup{\ref{#1}}}

\newcommand{\thmref}[1]{Theorem~\textup{\ref{#1}}}
\newcommand{\corref}[1]{Corollary~\textup{\ref{#1}}}
\newcommand{\lemref}[1]{Lemma~\textup{\ref{#1}}}
\newcommand{\propref}[1]{Proposition~\textup{\ref{#1}}}

\newcommand{\defnref}[1]{Definition~\textup{\ref{#1}}}


\newcommand{\KK}{\mathcal K}


\newcommand{\variso}{\overset{\simeq}{\longrightarrow}}

\renewcommand{\bar}{\overline}
\newcommand{\what}{\widehat}
\newcommand{\wilde}{\widetilde}
\newcommand{\inv}{^{-1}}

\newcommand{\smtx}[1]{\left(\begin{smallmatrix} #1\end{smallmatrix}\right)}
\newcommand{\mtx}[1]{\begin{pmatrix} #1 \end{pmatrix}}

\renewcommand{\:}{\colon}
\newcommand{\cotimes}{\mathrel{\sharp}}


\renewcommand{\)}{\textup)}
\newcommand{\rt}{\textup{rt}}

\newcommand{\id}{\text{\textup{id}}}
\newcommand{\triv}{\delta_{\textup{triv}}}

\DeclareMathOperator{\ad}{Ad}
\DeclareMathOperator*{\spn}{span}
\DeclareMathOperator*{\clspn}{\overline{\spn}}

\newcommand{\romannum}{\renewcommand{\labelenumi}{(\roman{enumi})}}


\newcommand{\righttext}[1]{\quad\text{#1 }}

\newcommand{\nd}{_\mathbf{nd}}
\newcommand{\en}{_\mathbf{en}}
\newcommand{\ou}{_\mathbf{ou}}
\newcommand{\du}{_\mathbf{sc}}

\newcommand{\cs}{\mathbf{C}^*}
\newcommand{\csn}{\cs\nd}
\newcommand{\cse}{\cs\en}

\newcommand{\ac}{\mathbf{Ac}}
\newcommand{\acn}{\ac_{\mathbf{nd}}}
\newcommand{\ace}{\ac_{\mathbf{en}}}

\newcommand{\wac}{\rt\text{-}{\ac}}
\newcommand{\wacn}{\wac\nd}
\newcommand{\wace}{\wac\en}
\newcommand{\waco}{\wac\ou}

\newcommand{\eqac}{\rt/\ac}

\newcommand{\eqace}{\eqac\en}
\newcommand{\eqaco}{\eqac\ou}
\newcommand{\eqacd}{\eqac\du}

\newcommand{\co}{\mathbf{Co}}
\newcommand{\con}{\co\nd}
\newcommand{\coe}{\co\en}
\newcommand{\coo}{\co\ou}

\newcommand{\kalg}{\KK/\cs}
\newcommand{\kalgn}{\KK\!\text{ - }\!\csn}
\newcommand{\kalge}{\kalg\en}

\newcommand{\kalgen}{\KK\!\text{ - }\!\cse}

\newcommand{\kco}{\KK/\co}
\newcommand{\kcoe}{\kco\en}

\newcommand{\st}{\textup{St}}

\newcommand{\dst}{\textup{DSt}}
\newcommand{\dstn}{\dst\nd}
\newcommand{\dste}{\dst\en}

\newcommand{\wst}{\wilde{\st}}

\newcommand{\wste}{\wst\en}

\newcommand{\cp}{\textup{CP}}

\DeclareMathOperator{\cpc}{CPC}
\DeclareMathOperator{\cpa}{CPA}

\DeclareMathOperator{\cpce}{\cpc\en}

\DeclareMathOperator{\cpcd}{\cpc\du}
\DeclareMathOperator{\cpae}{\cpa\en}

\newcommand{\coem}{\co\en^m}

\newcommand{\cpcm}{\cpc^m}
\newcommand{\cpcem}{\cpc\en^m}

\newcommand{\wcp}{\wilde{\cp}}
\newcommand{\wcpn}{\wcp\nd}

\newcommand{\fix}{\textup{Fix}}
\newcommand{\fixn}{\fix\nd}
\newcommand{\fixe}{\fix\en}

\newcommand{\ma}{\textup{Max}}

\DeclareMathOperator{\nor}{Nor}

\begin{document}
\title{Dualities for maximal coactions}
\author[Kaliszewski]{S.~Kaliszewski}
\address{School of Mathematical and Statistical Sciences
\\Arizona State University
\\Tempe, Arizona 85287}
\email{kaliszewski@asu.edu}
\author[Omland]{Tron Omland}
\address{School of Mathematical and Statistical Sciences
\\Arizona State University
\\Tempe, Arizona 85287}
\email{omland@asu.edu}
\author[Quigg]{John Quigg}
\address{School of Mathematical and Statistical Sciences
\\Arizona State University
\\Tempe, Arizona 85287}
\email{quigg@asu.edu}

\date{March 4, 2016}

\subjclass[2010]{Primary 46L55; Secondary 46M15}
\keywords{action, coaction, crossed-product duality, category equivalence, $C^*$-correspondence, exterior equivalence, outer conjugacy}

\begin{abstract}
We present a new construction of crossed-product duality for maximal coactions that uses Fischer's work on maximalizations.
Given a group $G$ and a coaction $(A,\delta)$ we define a generalized fixed-point algebra as a certain subalgebra of $M(A\rtimes_\delta G \rtimes_{\what\delta} G)$,
and recover the coaction via this double crossed product.
Our goal is to formulate this duality in a category-theoretic context, and one advantage of our construction is
that it breaks down into parts that are easy to handle in this regard.
We first explain this for the category of nondegenerate *-homomorphisms, and then analogously
for the category of $C^*$-correspondences.
Also, we outline partial results for the ``outer'' category, studied previously by the authors.
\end{abstract}

\maketitle

\section{Introduction}\label{intro}

One of the fundamental constructions in the area of $C^*$-dynamical systems is the crossed product, which is a $C^*$-algebra having the same representation theory as the $C^*$-dynamical system.
\emph{Crossed-product duality for $C^*$-dynamical systems} is the recovery of the system from its crossed product.
This recovery plays a central role in many aspects of $C^*$-dynamical systems,
and consequently has a variety of formulations,
differing primarily in one of two ways:
the nature of the dynamical systems under consideration,
and the sense in which the system is to be recovered.
We consider the $C^*$-dynamical systems to be either actions or coactions of a locally compact group $G$.

\emph{Imai-Takai} (or \emph{Imai-Takai-Takesaki}) \emph{duality}
\cite{it}
recovers an action up to Morita equivalence
from its reduced crossed product.
Dually,
\emph{Katayama duality}
\cite{kat}
recovers a coaction up to Morita equivalence from its crossed product.
More precisely, in both cases one recovers the original algebra up to tensoring with the compact operators on $L^2(G)$, by forming the double crossed product.
On the other hand,
some crossed-product dualities recover the $C^*$-dynamical system up to isomorphism:
for example,
\emph{Landstad duality}
\cite{landstad}
recovers an action up to isomorphism from its reduced crossed product,
and
\cite{kqfulllandstad}
recovers it from the full crossed product.
Dually,
\cite{QuiLandstad}
recovers a coaction up to isomorphism from its crossed product.

In recent years, some of these dualities have been put on a categorical footing,
casting the crossed-product procedure as a functor,
and producing a quasi-inverse.
Categorical versions of Landstad duality (for actions or for coactions)
\cite{clda, cldx}
require \emph{nondegenerate categories}, in which the morphisms are equivariant nondegenerate homomorphisms into multiplier algebras.
In these categorical dualities the quasi-inverse is constructed from
a \emph{generalized fixed-point algebra} inside the multiplier algebra of the crossed product.

In \cite{koqlandstad} we prove
categorical versions of Imai-Takai and Katayama duality,
which
require categories in which the morphisms are (isomorphism classes of)
equivariant $C^*$-correspondences,
sometimes referred to as \emph{enchilada categories}.
In that paper, which is partly expository, we also present formulations of
the \emph{nondegenerate dualities}
of \cite{clda, cldx},
to highlight the parallels among the dualities of \cite{koqlandstad}.
In fact, we formulate the \emph{enchilada dualities} in a manner that is closer to the nondegenerate dualities than to
the original theorems of \cite{it, kat, kqfulllandstad},
by 
combining the techniques of generalized fixed-point algebras and linking algebras.

The main innovation in \cite{koqlandstad} is the introduction of
\emph{outer duality},
where the crossed-product functor gives an equivalence between
a category of actions
in which 
outer conjugacy is added to
the morphisms 
of the nondegenerate category
and
a category of coactions
in which the morphisms are required to respect the generalized fixed-point algebras.
The proof of outer duality for actions in \cite{koqlandstad} depends upon a theorem of Pedersen 
\cite[Theorem~35]{pedersenexterior}
(that we had to extend from abelian to arbitrary groups) characterizing exterior equivalent actions in terms of a special equivariant isomorphism of the crossed products.
However, we do not have a fully functioning version of Pedersen's theorem for coactions, and because of that, we were not able to obtain a complete outer duality for coactions.

The structure of \cite{koqlandstad} has
a section in which the nondegenerate, the enchilada, and the outer dualities for actions are presented in parallel form,
followed by a final section containing
dual versions:
the nondegenerate, the enchilada, and a partial outer duality for normal coactions.

In the current paper we investigate to what extent the three dualities in the final section of \cite{koqlandstad} carry over from normal to maximal coactions.
Since the categories of maximal and normal coactions are equivalent \cite{clda},
it is natural to expect that things should go well.
Indeed, the nondegenerate and the enchilada dualities do carry over to maximal coactions. Again, this means that one can recover a maximal coaction from its crossed product in a categorical framework, both for the nondegenerate and the enchilada categories; the nondegenerate case is well-known, whereas the enchilada case is new; we present both to highlight the parallel.
However, outer duality presents even more difficulties with maximal coactions than with normal ones.

The dualities for maximal coactions require a different construction than the one in \cite{koqlandstad}.
In that paper, the dualities
for crossed products by normal coactions recover the coaction via a normal coaction defined on a generalized fixed-point algebra that is contained in the multiplier algebra of the crossed product.
However, this generalized fixed-point algebra is not appropriate for the recovery of a maximal coaction, because the coaction produced by the construction is normal, and indeed it is not clear how one could construct a naturally occurring faithful copy of the maximal coaction inside the multipliers of the crossed product.
One of the more delicate aspects inherent in the theory of crossed products by a coaction $(A,\delta)$ of $G$
is that the image of $A$ inside the multiplier algebra $M(A\rtimes_\delta G)$ is faithful if and only if the coaction $\delta$ is normal.
Thus, to get a faithful copy of $A$ when $\delta$ is maximal we must look elsewhere.
The
main innovation in the current paper is the construction of a
\emph{maximal generalized fixed-point algebra} inside the multipliers of the full crossed product $A\rtimes_\delta G\rtimes_{\what\delta} G$ by the \emph{dual action} $\what\delta$.
And to avoid confusion we refer to the earlier algebras inside $M(A\rtimes_\delta G)$ as
\emph{normal generalized fixed-point algebras}.

Our approach depends heavily upon Fischer's construction \cite{fischer} of the \emph{maximalization} of a coaction.
Fischer's construction,
which we feel deserves more attention,
is based upon the factorization of a stable $C^*$-algebra $A$ as $B\otimes\KK$ (where $\KK$ denotes the algebra of compact operators on a separable infinite-dimensional Hilbert space), via a process that produces the $B$ as a \emph{relative commutant of $\KK$}.
We study this \emph{destabilization} process in detail in \cite{koqstable}.
Our use of Fischer's construction allows us to factor the maximalization process into three
more elementary steps:
first take the
crossed product by the coaction,
then 
the
crossed product by the dual action --- but perturb the double dual coaction by a cocycle ---
and finally 
take the
relative commutant of a naturally occurring copy of $\KK$.
It is our opinion that
that this
decomposition gives
rise to
an improved
set of tools to handle maximalizations.

For our present purposes Fischer's construction allows us to devise a formula for the quasi-inverse
in the categorical formulation of recovery of a coaction up to isomorphism from its crossed product,
using the maximal generalized fixed-point algebra of the dual action.

We use the same maximal generalized fixed-point algebra to prove enchilada  duality for maximal coactions, via standard linking-algebra techniques.

In the final section we discuss some of the challenges in obtaining an outer duality for maximal coactions.

One of the themes running through \cite{koqlandstad} is \emph{good inversion}. We think of recovering the $C^*$-dynamical system from its crossed product as inverting a process.
More precisely, the crossed-product process gives a $C^*$-algebra --- the crossed product itself --- that is part of a dual $C^*$-dynamical system equipped with some extra information.
Extracting just the crossed product from this extra stuff can be regarded as a forgetful functor, and
we call the inversion \emph{good} if this forgetful functor enjoys a certain lifting property.
In parallel to \cite{koqlandstad},
the inversion is good in the case of
nondegenerate duality for maximal coactions
and not good for enchilada duality.

In \cite{BusEch}, Buss and Echterhoff develop a powerful technique that handles both maximal and normal coactions, and indeed any \emph{exotic} coaction in between, in a unified manner, by inventing a generalization of Rieffel's approach to generalized fixed-point algebras and applying it to the dual action on the crossed product. In particular, given a coaction $(A,\delta)$, the techniques of \cite{BusEch} give both a maximalization and a normalization by finding a *-subalgebra $(A\rtimes_\delta G)^{G,\what\delta}_c$ of $M(A\rtimes_\delta G)$ and completing in suitable norms.
This is quite distinct from the technique we employ in this paper, where the maximalization is constructed as a subalgebra of $M(A\rtimes_\delta G\rtimes_{\what\delta} G)$.
Both approaches 
should prove useful.

The second author is funded by the Research Council of Norway (Project no.: 240913).

\section{Preliminaries}\label{prelims}

Throughout, $G$ denotes a second countable infinite locally compact group, and $A,B,\dots$ denote $C^*$-algebras.
We impose the countability assumption on $G$ so that $L^2(G)$ is infinite-dimensional and separable.
With some fussiness, we could handle the inseparable case, but we eschew it here because we have no applications of such generality in mind.
We refer to \cite{koqlandstad} for our conventions regarding actions, coactions, $C^*$-correspondences, and cocycles for coactions.
In this paper we will work exclusively with maximal coactions, whereas normal ones figured prominently in \cite{koqlandstad}.

We recall some notation for the convenience of the reader.
The left and right regular representations of $G$ are $\lambda$ and $\rho$, respectively,
the representation of $C_0(G)$ on $L^2(G)$ by multiplication operators is $M$,
and the unitary element $w_G\in M(C_0(G)\otimes C^*(G))$ is the strictly continuous map $w_G\:G\to M(C^*(G))$ given by the canonical embedding of $G$.
The action of $G$ on $C_0(G)$ by right translation is $\rt$.
The crossed product of an action $(A,\alpha)$ of $G$ is $A\rtimes_\alpha G$,
with universal covariant homomorphism $(i_A,i_G)\:(A,G)\to M(A\rtimes_\alpha G)$,
and we use superscripts $i_A^\alpha,i_G^\alpha$ if confusion is likely.
Recall that a coaction is a nondegenerate faithful homomorphism
$\delta\:A\to M(A\otimes C^*(G))$
such that
$(\delta\otimes\id)\circ\delta=(\id\otimes\delta_G)\circ\delta$
and
$\clspn\{\delta(A)(1\otimes C^*(G))\}=A\otimes C^*(G)$,
where
$\delta_G$ denotes the canonical 
coaction
on $C^*(G)$ given by the integrated form of $\delta_G(s)=s\otimes s$ for $s\in G$.
For a coaction $(A,\delta)$, with crossed product $A\rtimes_\delta G$, we write $(j_A,j_G)$ for the universal covariant homomorphism,
and again we use superscripts $j_A^\delta,j_G^\delta$ if confusion is likely.
If $j_A$ is injective, the coaction $\delta$ is called \emph{normal}.
A \emph{normalization} of a coaction $(A,\delta)$ consists of a normal coaction $(B,\epsilon)$ and a surjective $\delta-\epsilon$ equivariant homomorphism $\phi\:A\to B$
such that the crossed product
$\phi\rtimes G\:A\rtimes_\delta G\to B\rtimes_\epsilon G$
is an isomorphism.
Sometimes the coaction $(B,\epsilon)$ itself is referred to as a normalization.
For an action $(A,\alpha)$,
there is a \emph{dual coaction} $\what\alpha$ of $G$ on $A\rtimes_\alpha G$, and
the nondegenerate homomorphism $i_G\:C^*(G)\to M(A\rtimes_\alpha G)$ is $\delta_G-\what\alpha$ equivariant.
For a coaction $(A,\delta)$,
there is a \emph{dual action} of $G$ on $A\rtimes_\delta G$, and the
nondegenerate homomorphism $j_G\:C_0(G)\to M(A\rtimes_\delta G)$ is equivariant for
$\rt$
and the dual action $\what\delta$ on $A\rtimes_\delta G$.
We
write $\KK=\KK(L^2(G))$, and
identify $C_0(G)\rtimes_{\rt} G=\KK$ and $\what\rt=\ad\rho$.
The regular covariant representation of a coaction $(A,\delta)$ on the Hilbert $A$-module $A\otimes L^2(G)$ is the pair $((\id\otimes\lambda)\circ\delta,1\otimes M)$.
The \emph{canonical surjection}
\[
\Phi_A\:A\rtimes_\delta G\rtimes_{\what\delta} G\to A\otimes \KK
\]
is the integrated form of the covariant homomorphism
\[
\bigl((\id\otimes\lambda)\circ\delta\times (1\otimes M),1\otimes\rho\bigr).
\]
If $\Phi_A$ is injective, the coaction $\delta$ is called \emph{maximal}.
A \emph{maximalization} of a coaction $(A,\delta)$ consists of a maximal coaction $(B,\epsilon)$ and a surjective $\epsilon-\delta$ equivariant homomorphism $\phi\:B\to A$
such that the crossed product
$\phi\rtimes G\:B\rtimes_\epsilon G\to A\rtimes_\delta G$
is an isomorphism.
Sometimes the coaction $(B,\epsilon)$ itself is referred to as a maximalization.

We recall a few facts
from \cite{fischer, maximal, qrtwisted, lprs}
concerning cocycles for coactions.
If $(A,\delta)$ is a coaction and $U\in M(A\otimes C^*(G))$ is a $\delta$-cocycle,
there are a \emph{perturbed coaction} $\ad U\circ\delta$ and
an isomorphism
\begin{equation}\label{Phi}
\Omega_U\:A\rtimes_{\ad U\circ\delta} G\variso A\rtimes_\delta G
\end{equation}
given by
$\Omega_U=j_A^\delta\times \mu$,
where
$\mu\:C_0(G)\to M(A\rtimes_\delta G)$ is the nondegenerate homomorphism determined by the unitary element
\[
(\mu\otimes\id)(w_G)=(j_A^\delta\otimes\id)(U)(j_G^\delta\otimes\id)(w_G)
\]
of $M\bigl((A\rtimes_\delta G)\otimes C^*(G)\bigr)$,
and moreover $\Omega_U$ is $\what{\ad U\circ\delta}-\what\delta$ equivariant.
For reduced coactions, this result is
\cite[Proposition~2.8]{qrtwisted}
(based upon the original version 
\cite[Theorem~2.9]{lprs},
which did not include the dual action).
Applying the equivalence between reduced and normal coactions
\cite[Proposition~3.1 and Theorem~3.4]{boiler},
the result carries over to normal coactions
(see
\cite[Proposition~3.6]{koqlandstad} for the statement).
Then, for any coaction $(A,\delta)$ with normalization $(A^n,\delta^n)$, the result in full generality, as stated above, follows by applying the equivariant isomorphism $A\rtimes_\delta G\simeq A^n\rtimes_{\delta^n} G$ from \cite[Proposition~2.6]{fullred} (and note that, although the dual action is not explicitly mentioned there, the equivariance of the isomorphism is obvious).
If we have another coaction $(B,\epsilon)$ and a nondegenerate $\delta-\epsilon$ equivariant homomorphism $\phi\:A\to M(B)$,
then $(\phi\otimes\id)(U)$ is an $\epsilon$-cocycle,
and $\phi$ is also
\[
(\ad U\circ\delta)-\bigl(\ad (\phi\otimes\id)(U)\circ\epsilon\bigr)
\]
equivariant.

The homomorphism
\[
\delta\otimes_*\id:=(\id\otimes\Sigma)\circ (\delta\otimes\id)\:
A\otimes \KK\to M(A\otimes \KK\otimes C^*(G)),
\]
where
$\Sigma\:\KK\otimes C^*(G)\to C^*(G)\otimes \KK$ is the flip isomorphism,
is a coaction.

The unitary
$w_G\in M(C_0(G)\otimes C^*(G))$
is a cocycle for the trivial coaction $\triv$ of $G$ on $C_0(G)$.
The multiplication representation $M$ of $C_0(G)$ on $L^2(G)$ may be regarded as a 
$\triv-\what\rt$ equivariant nondegenerate homomorphism
from $C_0(G)$ to the multiplier algebra $M(\KK)$,
so
$(M\otimes\id)(w_G)$ is a $\what\rt$-cocycle.
The perturbed coaction
$\ad (M\otimes\id)(w_G)\circ\what\rt$ is trivial,
as one checks with a routine calculation using
covariance of the pair $(M,\rho)$ and the identity
$w_G(1\otimes s)=(\rt_s\otimes\id)(w_G)$.
In other words, the coaction $\what\rt$ on $\KK$ is the inner coaction implemented by the 
unitary
\[
W:=(M\otimes\id)(w_G^*)\in M(\KK\otimes C^*(G)).
\]

It follows that if $(A,\delta)$ is a coaction then
$1_{M(A)}\otimes W$ is a $(\delta\otimes_*\id)$-cocycle.
Denote the perturbed coaction by
\[
\delta\otimes_* W=\ad (1\otimes W)\circ (\delta\otimes_*\id).
\]
Then the canonical surjection
$\Phi_A\:A\rtimes_\delta G\rtimes_{\what\delta} G\to A\otimes\KK$
is $\what{\what\delta}-(\delta\otimes_* W)$ equivariant.

\propref{Gamma} below is a reformulation of \cite[Section~1, particularly Lemma~1.16]{fischer}, and we state it formally for convenient reference.
First we introduce the pieces that will combine to make the statement of the proposition:

Consider the following diagram~\hypertarget{big}{\textup{(2.2)}}:
\begin{equation*}
\xymatrix@C-20pt{
(A\otimes\KK)\rtimes_{\delta\otimes_* W} G
\ar@{-->}[rr]^-{\Gamma_A}_-\simeq
\ar[dd]_{(\phi\otimes\id)\rtimes G}
\ar[dr]_{\Omega_{1\otimes W}}^\simeq
&&(A\rtimes_\delta G)\otimes\KK
\ar[dd]^{(\phi\rtimes G)\otimes\id}
\\
&(A\otimes\KK)\rtimes_{\delta\otimes_*\id} G
\ar'[d]^{(\phi\otimes\id)\rtimes G}[dd]
\ar[ur]^\simeq
\\
M((B\otimes\KK)\rtimes_{\epsilon\otimes_* W} G)
\ar@{-->}[rr]^(.4){\Gamma_B}_(.4)\simeq
\ar[dr]_{\Omega_{1\otimes W}}^\simeq
&&M((B\rtimes_\epsilon G)\otimes\KK)
\\
&M((B\otimes \KK)\rtimes_{\epsilon\otimes_*\id} G).
\ar[ur]^\simeq
}
\end{equation*}

The upper southeast arrow $\Omega$ is the isomorphism associated to the $(\delta\otimes_*\id)$-cocycle $1_{M(A)}\otimes W$,
and is
\[
\what{\delta\otimes_* W}-\what{\delta\otimes_*\id}
\]
equivariant,
and similarly the lower southeast arrow is $\what{\epsilon\otimes_* W}-\what{\epsilon\otimes_*\id}$ equivariant.

The upper northeast arrow is the isomorphism
\begin{align*}
&(j_A^\delta\otimes\id_\KK)\times (j_G^\delta\otimes 1_{M(\KK)}),
\end{align*}
and is
\[
\what{\delta\otimes_*\id}-(\what\delta\otimes\id)
\]
equivariant,
and similarly the lower northeast arrow is $\what{\epsilon\otimes_*\id}-(\what\epsilon\otimes\id)$ equivariant.

The left-hand vertical arrow is the crossed product of the
\[
(\delta\otimes_* W)-(\epsilon\otimes_* W)
\]
equivariant homomorphism $\phi\otimes\id_\KK$,
and similarly the middle vertical arrow is the crossed product of $\phi\otimes\id$, but now regarded as being $(\delta\otimes_*\id)-(\epsilon\otimes_*\id)$ equivariant.

The right-hand vertical arrow is the tensor product with $\id_\KK$ of the crossed product $\phi\rtimes G$ of $\phi$,
and is
\[
(\what\delta\otimes\id)-(\what\epsilon\otimes\id)
\]
equivariant.

The upper horizontal arrow $\Gamma_A$ is defined so that the top triangle commutes,
and is
\[
\what{\delta\otimes_* W}-(\what\delta\otimes\id)
\]
equivariant,
and similarly the lower horizontal arrow $\Gamma_B$ is $\what{\epsilon\otimes_* W}-(\what\epsilon\otimes\id)$ equivariant.

\begin{prop}[Fischer]\label{Gamma}
The diagram~\hyperlink{big}{\textup{(2.2)}} commutes.
\end{prop}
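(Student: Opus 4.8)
The plan is to view the diagram as a triangular prism and to check it face by face. The two triangular faces --- $\{$top-left, $(A\otimes\KK)\rtimes_{\delta\otimes_*\id}G$, top-right$\}$ and its $B$-analogue --- commute by construction, since $\Gamma_A$ and $\Gamma_B$ are \emph{defined} as the composites $\kappa_A\circ\Omega_{1\otimes W}$ and $\kappa_B\circ\Omega_{1\otimes W}$, where $\kappa_A=(j_A^\delta\otimes\id_\KK)\times(j_G^\delta\otimes1_{M(\KK)})$ is the upper northeast arrow and $\kappa_B$ its lower counterpart. Moreover, the quadrilateral face carrying $\Gamma_A$ and $\Gamma_B$ is a purely formal consequence of the two remaining quadrilaterals together with the two triangles --- one simply composes the two commuting squares along the shared middle vertical $(\phi\otimes\id)\rtimes G$. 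So everything reduces to the two ``back'' quadrilaterals: naturality (in $\phi$) of $\Omega_{1\otimes W}$, and naturality of $\kappa$.

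For the $\Omega$-square I would first observe that $\phi\otimes\id_\KK$, being unital on multipliers, carries the cocycle $1_{M(A)}\otimes W$ to $1_{M(B)}\otimes W$; hence, by the cocycle calculus recalled before the proposition (the paragraph around \eqref{Phi}), $\phi\otimes\id_\KK$ is equivariant both for $(\delta\otimes_*\id)-(\epsilon\otimes_*\id)$ and for $(\delta\otimes_*W)-(\epsilon\otimes_*W)$, and the two verticals of this square are precisely the corresponding crossed products of $\phi\otimes\id_\KK$. Commutativity then follows by testing on the two families of generators of $(A\otimes\KK)\rtimes_{\delta\otimes_*W}G$: on the copy of $A\otimes\KK$ both legs manifestly agree, and on the copy of $C_0(G)$ the needed identity $\bigl((\phi\otimes\id)\rtimes G\bigr)\circ\mu_A=\mu_B$ comes from applying $\bigl((\phi\otimes\id)\rtimes G\bigr)\otimes\id$ to the defining formula $(\mu_A\otimes\id)(w_G)=(j^{\delta\otimes_*\id}_{A\otimes\KK}\otimes\id)(1\otimes W)(j^{\delta\otimes_*\id}_G\otimes\id)(w_G)$, using that $\phi\otimes\id$ fixes $1\otimes W$ and is natural on the $j$'s, and cancelling $w_G$.

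For the $\kappa$-square I would verify $\bigl((\phi\rtimes G)\otimes\id\bigr)\circ\kappa_A=\kappa_B\circ\bigl((\phi\otimes\id)\rtimes G\bigr)$, again by testing on generators: $\kappa_A$ sends $j^{\delta\otimes_*\id}_{A\otimes\KK}(a\otimes k)\mapsto j_A^\delta(a)\otimes k$ and $j^{\delta\otimes_*\id}_G(f)\mapsto j_G^\delta(f)\otimes1$, so both composites land on $j_B^\epsilon(\phi(a))\otimes k$, resp.\ $j_G^\epsilon(f)\otimes1$, by naturality of $j_A^\delta$ and $j_G^\delta$ under $\phi\rtimes G$ and the analogous description of $\kappa_B$.

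Finally, combining $\Gamma_A=\kappa_A\circ\Omega_{1\otimes W}$ and $\Gamma_B=\kappa_B\circ\Omega_{1\otimes W}$ with the two back squares gives
\begin{align*}
\bigl((\phi\rtimes G)\otimes\id\bigr)\circ\Gamma_A
&=\kappa_B\circ\bigl((\phi\otimes\id)\rtimes G\bigr)\circ\Omega_{1\otimes W}\\
&=\kappa_B\circ\Omega_{1\otimes W}\circ\bigl((\phi\otimes\id)\rtimes G\bigr)
=\Gamma_B\circ\bigl((\phi\otimes\id)\rtimes G\bigr),
\end{align*}
so the last face commutes and the prism commutes. I do not expect a genuine obstacle here: the proposition repackages \cite[Lemma~1.16]{fischer}, and the individual arrows --- their isomorphism properties and their equivariances --- are already in place in the setup. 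The one point requiring care is bookkeeping: keeping the mutually perturbed coactions $\delta\otimes_*\id$ and $\delta\otimes_*W$ (and their duals) straight, and confirming at each step that $\phi\otimes\id_\KK$ is equivariant for the relevant pair --- which is exactly what the cocycle calculus supplies.
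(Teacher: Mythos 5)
Your proposal is correct and takes essentially the same route as the paper: the paper's entire proof is the one-line observation that the diagram ``follows from naturality of the southeast and northeast isomorphisms,'' which is precisely the reduction you make (triangles commute by definition of $\Gamma_A,\Gamma_B$; the front face follows formally from the two back naturality squares). Your additional generator-by-generator verifications of the two naturality squares are sound and simply spell out what the paper leaves implicit.
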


\begin{proof}
This follows from naturality of the southeast and northeast isomorphisms.
\end{proof}

\subsection*{Relative commutants}
If $\iota\:\KK\to M(A)$ is a nondegenerate homomorphism,
the \emph{$A$-relative commutant of $\KK$} is
\[
C(A,\iota):=\{m\in M(A):m\iota(k)=\iota(k)m\in A\text{ for all }k\in\KK\}.
\]
The canonical isomorphism $\theta_A\:C(A,\iota)\otimes\KK\variso A$ is determined by $\theta(m\otimes k)=m\iota(k)$
(see \cite[Remark~3.1]{fischer} and \cite[Proposition~3.4]{koqstable}).

As Fischer observes, $C(A,\iota)$ can be characterized as the unique closed subset $Z$ of $M(A)$ 
that commutes elementwise with $\iota(\KK)$ 
and 
satisfies
$\clspn \{Z\iota(\KK)\}=A$, since trivially such a $Z$ is contained in $C(A,\iota)$ while on the other hand the isomorphism $\theta_A$ shows that $Z$ cannot be a proper subset of $C(A,\iota)$.

Also, $M(C(A,\iota))$ can be characterized as the set of all elements of $M(A)$ commuting with the image of $\iota$, since this set is the idealizer of the nondegenerate subalgebra $C(A,\iota)$ of $M(A)$
(see \cite[Lemma~3.6]{koqstable}, alternatively \cite[Remark~3.1]{fischer} again).

\subsection*{$C^*$-correspondences}
We refer to \cite{enchilada, Lance} for the basic definitions and facts we will need regarding $C^*$-correspondences (but note that in \cite{enchilada} correspondences are called \emph{right-Hilbert bimodules}).
An $A-B$ correspondence $X$ is \emph{nondegenerate} if $AX=X$, and we always assume that our correspondences are nondegenerate.
If $Y$ is a $C-D$ correspondence and $\pi\:A\to M(C)$ and $\rho\:B\to M(D)$ are nondegenerate homomorphisms,
a $\pi-\rho$ compatible correspondence homomorphism $\psi\:X\to M(Y)$ is \emph{nondegenerate} if $\clspn\{\psi(X)D\}=Y$.
If $\delta$ and $\epsilon$ are coactions of $G$ on $A$ and $B$, respectively,
a \emph{coaction} of $G$ on $X$ is a nondegenerate $\delta-\epsilon$ correspondence homomorphism
$\zeta\:X\to M(X\otimes C^*(G))$ such that
$\clspn\{(1\otimes C^*(G))\zeta(X)\}=X\otimes C^*(G)$
and $(\zeta\otimes\id)\circ\zeta=(\id\otimes\delta_G)\circ\zeta$
(and then $\clspn\{\zeta(X)(1\otimes C^*(G))\}=X\otimes C^*(G)$ automatically holds).
If $X$ is an $A-B$ correspondence and $Y$ is a $B-C$ correspondence then
for any $C^*$-algebra $D$ there is an isomorphism
\[
\Theta\:(X\otimes D)\otimes_{B\otimes D} (Y\otimes D)\variso (X\otimes_B Y)\otimes D
\]
of $(A\otimes D)-(C\otimes D)$ correspondences,
given by
\[
\Theta\bigl((x\otimes d)\otimes (y\otimes d')\bigr)=(x\otimes y)\otimes dd'\righttext{for}x\in X,y\in Y,d,d'\in D,
\]
and we use this to form \emph{balanced tensor products of coactions} as follows:
given a $\delta-\epsilon$ compatible coaction $\zeta$ on $X$ and an $\epsilon-\varphi$ compatible coaction $\eta$ on $Y$,
we get a $\delta-\varphi$ compatible coaction
\[
\zeta\cotimes_B \eta:=\Theta\circ (\zeta\otimes_B\eta)
\]
on $X\otimes_B Y$.

\section{Nondegenerate categories and functors}\label{categories}

\subsection*{Categories}

We will recall some categories from \cite{koqlandstad} and \cite{koqstable} (and \cite{enchilada}), and we will introduce a few more.
Everything will be based upon
the \emph{nondegenerate category $\cs$ of $C^*$-algebras}, where a morphism $\phi\:A\to B$ is a nondegenerate homomorphism $\phi\:A\to M(B)$.
By nondegeneracy $\phi$ has a canonical extension
$\bar\phi\:M(A)\to M(B)$,
although
frequently we abuse notation by calling the extension $\phi$ also.
One notable exception to this abusive convention occurs in \secref{outer duality}, where we have to pay closer attention to the extensions $\bar\phi$.

The \emph{nondegenerate category $\co$ of coactions}
has coactions $(A,\delta)$ of $G$ as objects,
and a morphism $\phi\:(A,\delta)\to (B,\epsilon)$ in $\co$
is a morphism $\phi\:A\to B$ in $\cs$ that is \emph{$\delta-\epsilon$ equivariant}.
We write $\co^m$ for the full subcategory of $\co$ whose objects are the maximal coactions,
and $\co^n$ for the full subcategory of normal coactions.

The \emph{nondegenerate category $\ac$ of actions}
has actions $(A,\alpha)$ of $G$ as objects,
and a morphism $\phi\:(A,\alpha)\to (B,\beta)$ in $\ac$
is a morphism $\phi\:A\to B$ in $\cs$ that is \emph{$\alpha-\beta$ equivariant}.

We denote the coslice category (a special case of a \emph{comma category})
of actions under $(C_0(G),\rt)$ by
$\eqac$,
and we denote
an object by $(A,\alpha,\mu)$,
where 
$\mu\:(C_0(G),\rt)\to (A,\alpha)$ is a morphism in $\ac$.
Thus, a morphism $\phi\:(A,\alpha,\mu)\to (B,\beta,\nu)$ in $\eqac$ is a
commuting triangle
\[
\xymatrix{
&(C_0(G),\rt) \ar[dl]_\mu \ar[dr]^\nu
\\
(A,\alpha) \ar[rr]^\phi
&&(B,\beta)
}
\]
in $\ac$.
We call an object in $\eqac$ an \emph{equivariant action} \cite[Definition~2.8]{koqlandstad}.

We denote the coslice category of $C^*$-algebras under $\KK$ by
$\kalg$,
and we denote an object by $(A,\iota)$, where $\iota\:\KK\to A$ is a morphism in $\cs$.
Thus, a morphism $\phi\:(A,\iota)\to (B,\jmath)$ in $\kalg$ is a 
commuting triangle
\[
\xymatrix{
&\KK \ar[dl]_{\iota} \ar[dr]^{\jmath}
\\
A \ar[rr]^\phi
&&B
}
\]
in $\cs$.
We call an object in $\kalg$ a \emph{$\KK$-algebra}.

We denote the coslice category of coactions under the trivial coaction $(\KK,\triv)$ by
$\kco$,
and we denote an object by $(A,\delta,\iota)$, where $\iota\:(\KK,\triv)\to (A,\delta)$ is a morphism in $\co$,
i.e., $\delta\circ\iota=\iota\otimes 1$.
Thus, a morphism $\phi\:(A,\delta,\iota)\to (B,\epsilon,\jmath)$ in $\kco$ is a 
commuting triangle
\[
\xymatrix{
&(\KK,\triv) \ar[dl]_{\iota} \ar[dr]^{\jmath}
\\
(A,\delta) \ar[rr]^\phi
&&(B,\epsilon)
}
\]
in $\co$,
i.e.,
$\phi\:(A,\delta)\to (B,\epsilon)$ is a morphism in $\co$
and $\phi\:(A,\iota)\to (B,\jmath)$ is a morphism in $\kalg$.
We call an object in $\kco$ a \emph{$\KK$-coaction}.

In \cite[Subsection~5.1]{koqlandstad} we used the notations
$\csn$ for $\cs$
and
$\acn$ for $\ac$.
In \cite[Subsection~6.1]{koqlandstad} we wrote
$\con$ for the full subcategory of the present category $\co$ whose objects are the normal coactions,
and
$\wacn$ for $\eqac$ (and called it the \emph{nondegenerate equivariant category of actions}).
In \cite[Definition~4.1]{koqstable} we used the notation $\kalgn$ for $\kalg$.

\subsection*{Functors}

First, we define a functor
\[
\cpc\:\co\to \eqac
\]
on objects by
\[
\cpc(A,\delta)=(A\rtimes_\delta G,\what\delta,j_G),
\]
and
on morphisms
as follows: if
$\phi\colon (A,\delta)\to (B,\epsilon)$
is a morphism in $\co$
then
\[
\cpc(\phi)\:
(A\rtimes_\delta G,\what\delta,j_G^\delta)\to (B\rtimes_\epsilon G,\what\epsilon,j_G^\epsilon)
\]
is the morphism in $\eqac$ given by 
$\cpc(\phi)=\phi\rtimes G$.

We want to use equivariant actions to generate $\KK$-coactions.
Given an equivariant action $(A,\alpha,\mu)$, we can form the dual coaction $(A\rtimes_\alpha G,\what\alpha)$,
and we can also form the $\KK$-algebra $(A\rtimes_\alpha G,\mu\rtimes G)$.
But there is a subtlety: it is easy to see that $\what\alpha$ is not trivial on the image of $\mu\rtimes G$.
We need to perturb the coaction by a cocycle, and we adapt a technique from \cite[Lemma~3.6]{maximal}:

\begin{lem}\label{cocycle}
Let $(A,\alpha,\mu)$ be an equivariant action.
Define
\[
V_A=\bigl((i_A\circ\mu)\otimes\id\bigr)(w_G)\in M\bigl((A\rtimes_\alpha G)\otimes C^*(G)\bigr).
\]
Then $V_A$
is an $\what\alpha$-cocycle.
Denote the perturbed coaction by
\[
\wilde\alpha:=\ad V_A\circ \what\alpha.
\]
Then $(A\rtimes_\alpha G,\wilde\alpha,\mu\rtimes G)$ is a maximal $\KK$-coaction.
\end{lem}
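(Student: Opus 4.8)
The plan is to establish, in turn: (1) $V_A$ is an $\what\alpha$-cocycle; (2) $\mu\rtimes G$ makes $A\rtimes_\alpha G$ into a $\KK$-algebra; (3) the perturbed coaction $\wilde\alpha$ is trivial on the image of $\mu\rtimes G$, so that $(A\rtimes_\alpha G,\wilde\alpha,\mu\rtimes G)$ is an object of $\kco$; and (4) $\wilde\alpha$ is maximal.

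For (1), I would observe that the composite $i_A\circ\mu\colon C_0(G)\to M(A\rtimes_\alpha G)$ is a nondegenerate homomorphism, and that it is $\triv-\what\alpha$ equivariant because $\what\alpha(i_A(\mu(f)))=i_A(\mu(f))\otimes 1=((i_A\circ\mu)\otimes\id)(\triv(f))$. Since $w_G$ is a $\triv$-cocycle (\secref{prelims}), the pushforward property of cocycles recalled in \secref{prelims}, applied to $i_A\circ\mu$, gives immediately that $V_A=((i_A\circ\mu)\otimes\id)(w_G)$ is an $\what\alpha$-cocycle; in particular $\wilde\alpha=\ad V_A\circ\what\alpha$ is a coaction. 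For (2), since $\mu$ is a morphism $(C_0(G),\rt)\to(A,\alpha)$ in $\ac$, applying the crossed-product functor and the identification $C_0(G)\rtimes_\rt G=\KK$ yields a nondegenerate homomorphism $\mu\rtimes G=(i_A\circ\mu)\times i_G\colon\KK\to M(A\rtimes_\alpha G)$, whose image is generated as a $C^*$-algebra by $i_A(\mu(C_0(G)))$ and $i_G(C^*(G))$.

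For (3) it then suffices to evaluate $\wilde\alpha$ on the two families of generators $i_A(\mu(f))$ ($f\in C_0(G)$) and $i_G(s)$ ($s\in G$). On $i_A(\mu(f))$ one has $\what\alpha(i_A(\mu(f)))=i_A(\mu(f))\otimes 1$, and since $w_G$ commutes with $C_0(G)\otimes 1$ the conjugation by $V_A$ does nothing, so $\wilde\alpha(i_A(\mu(f)))=i_A(\mu(f))\otimes 1$. On $i_G(s)$ one has $\what\alpha(i_G(s))=i_G(s)\otimes s$, and combining the covariance relation $i_G(s)\,i_A(\mu(f))\,i_G(s)^{*}=i_A(\mu(\rt_s f))$ — which is precisely where $\rt-\alpha$ equivariance of $\mu$ is used — with the identity $(\rt_s\otimes\id)(w_G)=w_G(1\otimes s)$ produces $(i_G(s)\otimes 1)V_A=V_A(i_G(s)\otimes s)$, whence $\wilde\alpha(i_G(s))=V_A(i_G(s)\otimes s)V_A^{*}=i_G(s)\otimes 1$. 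Since $\wilde\alpha$ and $m\mapsto m\otimes 1$ are homomorphisms agreeing on a generating set, they agree on $(\mu\rtimes G)(\KK)$, which is the required triviality.

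The remaining point (4), maximality of $\wilde\alpha$, is the one I expect to be the real obstacle, as it is the only step not reducible to a direct computation. Here I would invoke the standard fact (a consequence of Imai--Takai duality) that the dual coaction $\what\alpha$ on $A\rtimes_\alpha G$ is maximal, together with the invariance of maximality under cocycle perturbation. For the latter, the isomorphism $\Omega_{V_A}\colon A\rtimes_\alpha G\rtimes_{\wilde\alpha}G\variso A\rtimes_\alpha G\rtimes_{\what\alpha}G$ of \eqref{Phi} is $\what{\wilde\alpha}-\what{\what\alpha}$ equivariant, so after a further crossed product by $G$ it intertwines the two canonical surjections onto $(A\rtimes_\alpha G)\otimes\KK$ (as one checks on generators); injectivity of the surjection attached to $\what\alpha$ then forces injectivity of the one attached to $\wilde\alpha$, i.e., $\wilde\alpha$ is maximal. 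Alternatively, one may simply cite that exterior equivalent coactions are simultaneously maximal or not.
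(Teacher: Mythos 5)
Your proposal is correct and follows essentially the same route as the paper: both obtain the cocycle property by pushing the $\triv$-cocycle $w_G$ forward along an equivariant nondegenerate homomorphism, and both settle maximality by combining maximality of dual coactions with its invariance under cocycle perturbation, i.e.\ Morita equivalence, exactly as in \cite[Propositions~3.4 and 3.5]{maximal}. The only organizational difference is in the triviality of $\wilde\alpha$ on $(\mu\rtimes G)(\KK)$: you verify it by the generator computation using covariance and $w_G(1\otimes s)=(\rt_s\otimes\id)(w_G)$, whereas the paper performs that same computation once in the preliminaries to show that $\ad (M\otimes\id)(w_G)\circ\what\rt$ is trivial on $\KK$, and then transports the conclusion along the $\what\rt-\what\alpha$ equivariant map $\mu\rtimes G$.
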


\begin{proof}
We could apply the proof from \cite[Lemmas~3.6--3.7]{maximal} to show that $V_A$ is an $\what\alpha$-cocycle and $\wilde\alpha$ is trivial on the image of $\mu\rtimes G$, because by Landstad duality \cite[Theorem~3.3]{QuiLandstad} the equivariant action $(A,\alpha,\mu)$ is isomorphic to one of the form $(B\rtimes_\delta G,\what\delta,j_G)$ for a coaction $(B,\delta)$.
We prefer to use the technology of cocycles more directly, however:
$\mu\:(C_0(G),\rt)\to (A,\alpha)$ is a morphism in $\ac$,
so
\[
\mu\rtimes G\:(\KK,\what\rt)\to (A\rtimes_\alpha G,\what\alpha)
\]
is a morphism in $\co$, where we recall that we are abbreviating $\KK=\KK(L^2(G))$.
The unitary
\[
(M\otimes\id)(w_G)
\]
is a $\what\rt$-cocycle,
so
\begin{align*}
\bigl((\mu\rtimes G)\otimes\id\bigr)\bigl((M\otimes\id)(w_G)\bigr)
&=\bigl((i_A\circ\mu)\otimes\id\bigr)(w_G)
\end{align*}
is an $\what\alpha$-cocycle.

Further, recall that the perturbed coaction
$\ad (M\otimes\id)(w_G)\circ\what\rt$ is trivial.
Since $\mu\rtimes G\:\KK\to M(A\rtimes_\alpha G)$ is $\what\rt-\what\alpha$ equivariant,
it follows that $\ad V_A\circ\what\alpha$ is trivial on the image of $\mu\rtimes G$.

Finally,
the dual coaction $\what\alpha$ is maximal by \cite[Proposition~3.4]{maximal}, and hence so is the Morita equivalent coaction $\wilde\alpha$ by \cite[Proposition~3.5]{maximal}.
\end{proof}

Next, we define a functor
\[
\cpa\:\eqac\to \kco
\]
on objects by
\[
\cpa(A,\alpha,\mu)=(A\rtimes_\alpha G,\wilde\alpha,\mu\rtimes G),
\]
and
on morphisms
as follows: if
$\phi\colon (A,\alpha,\mu)\to (B,\beta,\nu)$
is a morphism in $\eqac$
then
\[
\cpa(\phi)\:
(A\rtimes_\alpha G,\wilde\alpha,\mu\rtimes G)\to (B\rtimes_\beta G,\wilde\beta,\nu\rtimes G)
\]
is the morphism in $\kco$ given by 
$\cpa(\phi)=\phi\rtimes G$.

It follows from \cite[Theorem~4.4]{koqstable} (see also \cite[Definition~4.5]{koqstable})
that there is a functor
from $\kalg$ to $\cs$
that takes an object $(A,\iota)$ to the $A$-relative commutant $C(A,\iota)$ of $\KK$
and a morphism $\phi\:(A,\iota)\to (B,\jmath)$ to
\[
C(\phi)=\phi|_{C(A,\iota)},
\]
and
that 
is 
moreover 
a quasi-inverse of the \emph{stabilization functor}
given by $A\mapsto A\otimes\KK$ and $\phi\mapsto \phi\otimes\id_\KK$.

We need an equivariant version of 
this
functor,
and we need to see what to do with coactions.
Fischer remarks in \cite[Remark~3.2]{fischer} that
$\delta$
restricts to a coaction on the relative commutant $C(A,\iota)$.
To keep our treatment of the Fischer construction self-contained, we supply a proof (that is similar to, but not quite the same as, the one in \cite{fischer}):

\begin{lem}[Fischer]\label{coact commutant}
Let $(A,\delta,\iota)$ be a $\KK$-coaction.
Then
$\delta$ restricts to a coaction $C(\delta)$ on $C(A,\iota)$.
Moreover, 
the canonical isomorphism
\[
\theta_A\:C(A,\iota)\otimes\KK\variso A
\]
is $(C(\delta)\otimes_*\id)-\delta$ equivariant.
Finally,
$C(\delta)$ is maximal if and only if $\delta$ is.
\end{lem}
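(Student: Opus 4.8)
The plan is to \emph{define} $C(\delta)$ to be the restriction to $C(A,\iota)$ of the canonical strictly continuous extension $\bar\delta\colon M(A)\to M(A\otimes C^*(G))$, and then to deduce everything else --- the coaction axioms, the intertwining with $\theta_A$, and the maximality equivalence --- by transporting structure across the isomorphism $\theta_A$. The first point to settle is that $\bar\delta$ actually carries $C(A,\iota)$ into the \emph{smaller} algebra $M(C(A,\iota)\otimes C^*(G))$, not merely into $M(A\otimes C^*(G))$. For this I would first record the auxiliary identity $C(A\otimes C^*(G),\iota\otimes 1)=C(A,\iota)\otimes C^*(G)$ inside $M(A\otimes C^*(G))$: the algebra $C(A,\iota)\otimes C^*(G)$ commutes elementwise with $\iota(\KK)\otimes 1$, and $\clspn\{(C(A,\iota)\otimes C^*(G))(\iota(\KK)\otimes 1)\}=\clspn\{C(A,\iota)\iota(\KK)\}\otimes C^*(G)=A\otimes C^*(G)$, so the uniqueness characterization of relative commutants recalled in \secref{prelims} applies. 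Together with the analogous description of the multiplier algebra of a relative commutant as an idealizer, this identifies $M(C(A,\iota)\otimes C^*(G))$ with the set of elements of $M(A\otimes C^*(G))$ commuting with $\iota(\KK)\otimes 1$. Since $\bar\delta(\iota(k))=\iota(k)\otimes 1$ for $k\in\KK$ and every $m\in C(A,\iota)$ commutes with $\iota(\KK)$, applying the homomorphism $\bar\delta$ shows $\bar\delta(m)$ commutes with $\iota(\KK)\otimes 1$; hence $C(\delta):=\bar\delta|_{C(A,\iota)}$ maps into $M(C(A,\iota)\otimes C^*(G))$.

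Next I would verify the intertwining $\delta\circ\theta_A=(\theta_A\otimes\id)\circ(C(\delta)\otimes_*\id)$, which is the ``Moreover'' clause. On an elementary tensor $m\otimes k$ with $m\in C(A,\iota)$ and $k\in\KK$, the left-hand side equals $\delta(m\iota(k))=\bar\delta(m)(\iota(k)\otimes 1)$, using $\bar\delta(\iota(k))=\iota(k)\otimes 1$; the right-hand side, by the definitions of $\otimes_*\id$ and of $\theta_A$, is obtained by placing $\bar\delta(m)$ in the first and third legs, $k$ in the second leg, and then collapsing the $\KK$-leg into the first factor via $\iota$, and a short leg computation (made rigorous with slice maps) identifies it with $\bar\delta(m)(\iota(k)\otimes 1)$ as well. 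Since $\theta_A$ is an isomorphism, this identity transports the coaction axioms for $\delta$ to $C(\delta)\otimes_*\id$; and since the passage $\epsilon\mapsto\epsilon\otimes_*\id$ reflects each of the coaction conditions --- a routine compression argument using a rank-one projection in $\KK$ --- it follows that $C(\delta)$ is a coaction. (Faithfulness of $C(\delta)$ can also be checked directly: $C(\delta)(m)=0$ forces $\delta(m\iota(k))=0$ for all $k\in\KK$, hence $m\iota(\KK)=0$ by faithfulness of $\delta$, hence $m=0$ by nondegeneracy of $\iota$.)

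Finally, for the maximality equivalence: $\theta_A$, being an isomorphism intertwining $C(\delta)\otimes_*\id$ and $\delta$, makes maximality of $\delta$ equivalent to maximality of $C(\delta)\otimes_*\id$; and $(C(A,\iota)\otimes\KK,\,C(\delta)\otimes_*\id)$ is Morita equivalent to $(C(A,\iota),C(\delta))$ via the standard stabilization bimodule $C(A,\iota)\otimes L^2(G)$ (carrying $C(\delta)$ tensored with the trivial coaction on $L^2(G)$), so by the invariance of maximality under Morita equivalence \cite[Proposition~3.5]{maximal} this is in turn equivalent to maximality of $C(\delta)$. The step carrying the real content is the first one --- knowing that $\bar\delta$ restricts into the smaller multiplier algebra $M(C(A,\iota)\otimes C^*(G))$, and that the restriction still satisfies the coaction-nondegeneracy span condition --- since the remaining clauses are either formal consequences of the corresponding properties of $\delta$ or standard facts about stabilizations; but once the commutant bookkeeping of \secref{prelims} is in place, even this step is short.
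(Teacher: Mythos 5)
Your argument is correct and follows essentially the same route as the paper's: both rest on the uniqueness and idealizer characterizations of relative commutants recalled in \secref{prelims}, check the $(C(\delta)\otimes_*\id)-\delta$ equivariance of $\theta_A$ on elementary tensors, and deduce the maximality equivalence from Morita invariance of maximality. The only difference is organizational --- the paper verifies the span condition $\clspn\{\delta(C(A,\iota))(1\otimes C^*(G))\}=C(A,\iota)\otimes C^*(G)$ directly, by showing the left-hand side commutes elementwise with $\iota(\KK)\otimes 1$ and spans $A\otimes C^*(G)$ against it, whereas you obtain it by transporting across $\theta_A$ and compressing; the two come down to the same computation.
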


\begin{proof}
For the first part, by \cite[Lemma~1.6 (b)]{fullred}, it suffices to show that
\begin{equation}\label{nondegenerate}
\clspn\bigl\{\delta\bigl(C(A,\iota)\bigr)\bigl(1\otimes C^*(G)\bigr)\bigr\}=C(A,\iota)\otimes C^*(G).
\end{equation}
Let $Z$ denote the left-hand side.
It
suffices to show that 
\begin{enumerate}
\romannum
\item
$Z$ commutes elementwise with $\iota(\KK)\otimes 1$,
and

\item
$\clspn\{Z(\iota(\KK)\otimes 1)\}=A\otimes C^*(G)$.
\end{enumerate}
For (i), if $a\in C(A,\iota)$, $x\in C^*(G)$, and $k\in\KK$, then
\begin{align*}
\delta(a)(1\otimes x)(\iota(k)\otimes 1)
&=\delta(a)(\iota(k)\otimes 1)(1\otimes x)
\\&=\delta(a\iota(k))(1\otimes x)
\\&=\delta(\iota(k)a)(1\otimes x)
\\&=(\iota(k)\otimes 1)\delta(a)(1\otimes x).
\end{align*}
For (ii), the above computation implies that
\begin{align*}
\clspn\{Z(\iota(\KK)\otimes 1)\}
&=\clspn\bigl\{\delta\bigl(C(A,\iota)\iota(\KK)\bigr)\bigl(1\otimes C^*(G)\bigr)\bigr\}
\\&=\clspn\{\delta(A)(1\otimes C^*(G))\}
\\&=A\otimes C^*(G).
\end{align*}

One readily checks the $(C(\delta)\otimes_*\id)-\delta$ equivariance
on the generators $a\otimes 1$ and $1\otimes k$ for $a\in C(A,\iota)$ and $k\in\KK$.
The last statement now follows since the coaction $C(\delta)\otimes_*\id$ is Morita equivalent to $C(\delta)$.
\end{proof}

\lemref{coact commutant} allows us to define a functor
\[
\dst\:\kco\to \co
\]
on objects by
\[
\dst(A,\delta,\iota)=\bigl(C(A,\iota),C(\delta)\bigr),
\]
and on morphisms as follows:
if $\phi\:(A,\delta,\iota)\to (B,\epsilon,\jmath)$ is a morphism in $\kco$ then
\[
\dst(\phi)\:\bigl(C(A,\iota),C(\delta)\bigr)\to \bigl(C(B,\jmath),C(\epsilon)\bigr)
\]
is the morphism in $\co$ given by $\dst(\phi)=C(\phi)$,
where we note that the morphism $C(\phi)\:C(A,\iota)\to C(B,\jmath)$ in $\cs$ is $C(\delta)-C(\epsilon)$ equivariant since $\phi$ is $\delta-\epsilon$ equivariant.

In \cite[Definition~4.5]{koqstable} we used the notation $\dstn$ for the 
non-equivariant destabilization functor
$(A,\iota)\mapsto C(A,\iota)$,
while in the current paper we
use $\dst$ to denote the equivariant destabilization functor,
and give no name to the non-equivariant version.

\section{The Fischer construction}\label{Fischer construction}

In this section we define a particular maximalization functor (see \defnref{max def} below).
We will also need to refer to a normalization functor.
A small preliminary discussion might help clarify our approach to these two functors.
Both maximalization and normalization satisfy universal properties, and this can be used to work with the functors abstractly:
for example,
once one knows that maximalizations exist,
then one can use the categorical axiom of choice to assume that
a maximalization has been chosen for every coaction,
and then the universal property takes care of the morphisms.
However, we will find it useful to have a specific construction of the maximalization of a coaction, for example when working with linking algebras of Hilbert bimodules.
For this reason we will define the maximalization functor concretely.
Similarly for normalizations,
although here the choice is somewhat more immediate:
we define the normalization $(A^n,\delta^n)$ of a coaction $(A,\delta)$ by taking $A^n$ as a suitable quotient of $A$.
Then the universal property gives a functor $\nor$ on the nondegenerate category $\co$ of coactions of $G$.

In \cite[Section~3]{fischer} Fischer constructs a maximalization of a coaction $(A,\delta)$.
Actually, Fischer works in the more general context of coactions by Hopf $C^*$-algebras, which occasionally introduces minor complications (such as the existence of the maximalization).
Consequently, the construction simplifies somewhat since we have specialized to coactions of a locally compact group $G$.
Here we present Fischer's construction as a composition of three functors:

\begin{defn}[Fischer]\label{max def}
We define the \emph{maximalization functor $\ma$} by the commutative diagram
\[
\xymatrix@C+30pt{
\co \ar[r]^-{\cpc} \ar[d]_{\ma}
&\eqac \ar[d]^{\cpa}
\\
\co
&\kco \ar[l]^{\dst}.
}
\]
\end{defn}

Thus, given a coaction $(A,\delta)$,
we first form the equivariant action
\[
(A\rtimes_\delta G,\what\delta,j_G),
\]
then the $\KK$-coaction
\[
(A\rtimes_\delta G\rtimes_{\what\delta} G,\wilde{\what\delta},j_G\rtimes G),
\]
where $\wilde{\what\delta}$ is the perturbation of the double-dual coaction $\what{\what\delta}$ from \lemref{cocycle},
and finally the maximal coaction
\begin{align*}
\ma(A,\delta)
&=\dst\bigl((A\rtimes_\delta G\rtimes_{\what\delta} G,\wilde{\what\delta},j_G\rtimes G\bigr)
\\&=\bigl(C(A\rtimes_\delta G\rtimes_{\what\delta} G,j_G\rtimes G),C(\wilde{\what\delta})\bigr).
\end{align*}
We simplify some of the notation:
first, write
\[
\wilde\delta=\wilde{\what\delta}.
\]
Thus 
the second stage in the above process gives the $\KK$-coaction
\[
(A\rtimes_\delta G\rtimes_{\what\delta} G,\wilde\delta,j_G\rtimes G)
\]
and 
the maximalization is
\[
\ma(A,\delta)=\bigl(C(A\rtimes_\delta G\rtimes_{\what\delta} G,j_G\rtimes G),C(\wilde\delta)\bigr).
\]
We will actually use the more customary notation:
\begin{align*}
A^m&=C(A\rtimes_\delta G\rtimes_{\what\delta} G,j_G\rtimes G)
\\
\delta^m&=C(\wilde\delta),
\end{align*}
where the superscript $m$ is intended to remind us of ``maximalization''.

We need a couple of results from \cite{fischer}, which we recall here.
The first is \cite[Theorem~6.4]{fischer}:

\begin{thm}[Fischer]
With the above notation,
$(A^m,\delta^m)$ is a maximalization of $(A,\delta)$.
\end{thm}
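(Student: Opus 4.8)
The plan is to produce the required surjection $\phi\colon A^m\to A$ as the destabilization of Fischer's canonical surjection $\Phi_A\colon A\rtimes_\delta G\rtimes_{\what\delta}G\to A\otimes\KK$; this makes both maximality of $(A^m,\delta^m)$ and surjectivity of $\phi$ essentially automatic, leaving the assertion that $\phi\rtimes G$ is an isomorphism, which I will reduce to Imai--Takai duality by means of \propref{Gamma}. Maximality is immediate from the construction: by \lemref{cocycle}, applied to the equivariant action $(A\rtimes_\delta G,\what\delta,j_G)$, the triple $(A\rtimes_\delta G\rtimes_{\what\delta}G,\wilde\delta,j_G\rtimes G)$ is a maximal $\KK$-coaction --- in particular $\wilde\delta=\wilde{\what\delta}$ is maximal --- and then the last sentence of \lemref{coact commutant} gives that $\delta^m=C(\wilde\delta)$ is maximal.

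For the surjection I first claim $\Phi_A$ is a morphism in $\kco$ from $(A\rtimes_\delta G\rtimes_{\what\delta}G,\wilde\delta,j_G\rtimes G)$ to $(A\otimes\KK,\delta\otimes_*\id)$ with $\KK$ embedded by $k\mapsto 1\otimes k$. Indeed $\Phi_A$ is $\what{\what\delta}-(\delta\otimes_* W)$ equivariant by the preliminaries, and since $\wilde\delta=\ad V\circ\what{\what\delta}$ for the $\what{\what\delta}$-cocycle $V$ of \lemref{cocycle}, the cocycle-transfer fact makes $\Phi_A$ also $\wilde\delta-\bigl(\ad(\Phi_A\otimes\id)(V)\circ(\delta\otimes_* W)\bigr)$ equivariant; a direct computation from the defining covariant homomorphism of $\Phi_A$ gives $(\Phi_A\otimes\id)(V)=1\otimes W^*$, which cancels the $1\otimes W$ in $\delta\otimes_* W$ and leaves $\delta\otimes_*\id$, and the same computation shows $\Phi_A$ carries $(j_G\rtimes G)(\KK)$ onto $1\otimes\KK$. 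The destabilization of the target is $(A,\delta)$ (here the canonical isomorphism $\theta$ is the identity, so the relative commutant of $\KK$ is $A$ and $C(\delta\otimes_*\id)=\delta$), so we may set
\[
\phi:=\dst(\Phi_A)=\Phi_A|_{A^m}\colon(A^m,\delta^m)\to(A,\delta),
\]
an equivariant homomorphism. It is surjective: as $\dst$ is a quasi-inverse of stabilization, naturality of $\theta$ gives $\phi\otimes\id_\KK=\Phi_A\circ\theta_{A\rtimes_\delta G\rtimes_{\what\delta}G}$, which is onto, and $\psi\otimes\id_\KK$ onto forces $\psi$ onto.

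It remains to prove $\phi\rtimes G$ is an isomorphism, equivalently that $(\phi\rtimes G)\otimes\id_\KK$ is. Applying \propref{Gamma} with $\phi\colon(A^m,\delta^m)\to(A,\delta)$, commutativity of the top square identifies $(\phi\rtimes G)\otimes\id_\KK$, up to the isomorphisms $\Gamma_{A^m}$ and $\Gamma_A$, with $(\phi\otimes\id_\KK)\rtimes G$, so it is enough to see the latter is an isomorphism. Write $\theta\colon A^m\otimes\KK\variso A\rtimes_\delta G\rtimes_{\what\delta}G$ for the canonical isomorphism of \lemref{coact commutant}; an analogous cocycle computation (now relating $\theta$ and $W$) shows that $\theta$ is not merely $(\delta^m\otimes_*\id)-\wilde\delta$ equivariant but in fact $(\delta^m\otimes_* W)-\what{\what\delta}$ equivariant. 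Since $\phi\otimes\id_\KK=\Phi_A\circ\theta$, we get
\[
(\phi\otimes\id_\KK)\rtimes G=(\Phi_A\rtimes G)\circ(\theta\rtimes G),
\]
and $\theta\rtimes G$ is an isomorphism, so we are reduced to showing that
\[
\Phi_A\rtimes G\colon(A\rtimes_\delta G\rtimes_{\what\delta}G)\rtimes_{\what{\what\delta}}G\to(A\otimes\KK)\rtimes_{\delta\otimes_* W}G
\]
is an isomorphism. Post-composing with $\Gamma_A\colon(A\otimes\KK)\rtimes_{\delta\otimes_* W}G\variso(A\rtimes_\delta G)\otimes\KK$ from \propref{Gamma}, this reduces to identifying the resulting map $(A\rtimes_\delta G\rtimes_{\what\delta}G)\rtimes_{\what{\what\delta}}G\to(A\rtimes_\delta G)\otimes\KK$ with the Imai--Takai isomorphism for the dual action $\what\delta$ on $A\rtimes_\delta G$, which is an isomorphism for every coaction $(A,\delta)$.

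I expect this last identification to be the main obstacle: one must check on the canonical generators that Fischer's canonical surjection $\Phi_A$ --- which is not injective unless $\delta$ is already maximal --- becomes, after one further crossed product and the untwisting $\Gamma_A$, precisely the Imai--Takai duality isomorphism, i.e.\ that the two ``canonical'' maps coming from the Fischer construction and from Imai--Takai duality agree. Everything else reduces to \propref{Gamma}, \lemref{cocycle}, \lemref{coact commutant}, and the bookkeeping with cocycles and equivariances indicated above.
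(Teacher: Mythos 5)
Your construction is essentially the paper's own proof: your $\phi=\dst(\Phi_A)$ is exactly Fischer's $\psi_A$ (the paper characterizes it by $\psi_A\otimes\id=\Phi_A\circ\theta$, which is the same map), and your reduction of the invertibility of $\phi\rtimes G$ to that of $\Phi_A\rtimes G$ via \propref{Gamma} and the isomorphism $\theta\rtimes G$ is precisely the paper's second commutative diagram. The one point where you stop short --- identifying $\Gamma_A\circ(\Phi_A\rtimes G)$ with the Imai--Takai isomorphism for the action $(A\rtimes_\delta G,\what\delta)$ --- is exactly the step the paper also does not carry out, but simply asserts by marking $\Phi\rtimes G$ as an isomorphism; that $\Phi_A\rtimes G$ is always an isomorphism is a known result (it is the key technical fact underlying the existence of maximalizations, see \cite{maximal} and \cite{fischer}), so what remains is a citation rather than a missing idea. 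Your intermediate computations, in particular $(\Phi_A\otimes\id)(V)=1\otimes W^*$ and $\Phi_A\circ(j_G\rtimes G)=1\otimes\id_\KK$, are correct and usefully make explicit the $\wilde\delta-(\delta\otimes_*\id)$ equivariance that the paper leaves implicit.
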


We
outline Fischer's argument here:
the commutative diagram
\[
\xymatrix{
(A\rtimes_\delta G\rtimes_{\what\delta} G,\what{\what\delta}) \ar[r]^-\Phi
&(A\otimes\KK,\delta\otimes_* W)
\\
(A^m\otimes\KK,\delta^m\otimes_* W), \ar[u]^{\theta_{A\rtimes_\delta G\rtimes_{\what\delta} G}}_\simeq \ar@{-->}[ur]_{\psi_A\otimes\id}
}
\]
uniquely defines a surjective equivariant homomorphism as the north-east arrow,
which must be of the form $\psi_A\otimes\id$ for a unique surjection $\psi_A$
since the vertical and horizontal homomorphisms 
are morphisms of $\KK$-algebras.
We refer to \secref{prelims} for the notations $\Phi$ and $W$,
as well as $\Gamma$ (see the following diagram), etc.

The crossed product of this diagram fits into a commutative diagram
\[
\xymatrix@C+30pt{
A\rtimes_\delta G\rtimes_{\what\delta} G\rtimes_{\what{\what\delta}} G
\ar[r]^-{\Phi\rtimes G}_-\simeq
&(A\otimes\KK)\rtimes_{\delta\otimes_* W} G \ar[dd]^{\Gamma_A}_\simeq
\\
(A^m\otimes\KK)\rtimes_{\delta^m\otimes_*W} G
\ar[d]_{\Gamma_{A^m}}^\simeq
\ar[u]^{\theta_{A\rtimes_\delta G\rtimes_{\what\delta} G}\rtimes G}_\simeq \ar[ur]_{(\psi_A\otimes\id)\rtimes G}
\\
(A^m\rtimes_{\delta^m} G)\otimes\KK \ar[r]_-{(\psi_A\rtimes G)\otimes\id}
&(A\rtimes_\delta G)\otimes\KK.
}
\]
The vertical maps and the top horizontal map are isomorphisms,
and it follows that the bottom map $(\psi_A\rtimes G)\otimes\id$ is also an isomorphism,
and hence so is $\psi_A\rtimes G$ (because $\KK$ is nuclear,
or alternatively by applying the destabilization functor).
Therefore the map $\psi_A\:(A^m,\delta^m)\to (A,\delta)$ is a maximalization.

The second of Fischer's results that we need is
the \emph{universal property} of maximalizations
\cite[Lemma~6.2]{fischer}:

\begin{lem}[Fischer]
If $\psi_A\:(A^m,\delta^m)\to (A,\delta)$ is any \(abstract\) maximalization,
i.e., $\delta^m$ is maximal, $\psi_A$ is surjective, and $\psi_A\rtimes G$ is an isomorphism,
then
given a morphism $\phi\:(B,\epsilon)\to (A,\delta)$ in $\co$,
where $\epsilon$ is maximal,
there is a unique morphism $\phi'$ giving a commutative completion of the diagram
\[
\xymatrix{
(B,\epsilon) \ar@{-->}[r]^-{\phi'} \ar[dr]_\phi
&(A^m,\delta^m) \ar[d]^{\psi_A}
\\
&(A,\delta).
}
\]
\end{lem}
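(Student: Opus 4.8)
The plan is to prove existence by passing to crossed products --- where $\psi_A$ becomes invertible --- transporting $\phi$ into the category $\eqac$, destabilizing it back to a coaction via the functor $\dst\circ\cpa$, and finally correcting by the canonical maximalization isomorphisms that are available because both $\epsilon$ and $\delta^m$ are maximal. Uniqueness will then follow from the faithfulness of the functor $\cpc$.

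To keep it separate from the abstract map $\psi_A$ of the hypothesis, I would write $q_C\:\ma(C,\gamma)\to (C,\gamma)$ for Fischer's \emph{canonical} maximalization map of a coaction $(C,\gamma)$ (the map named $\psi_A$ in the theorem preceding this lemma, where $C^m=\ma(C,\gamma)$ is the concrete construction). I will use two properties of $q$. First, (a): the maps $q_C$ constitute a natural transformation $q\:\ma\Rightarrow\id_{\co}$; this is routine from the construction, since $q_C\otimes\id_\KK=\Phi_C\circ\theta_{C\rtimes_\gamma G\rtimes_{\what\gamma} G}$ is a composite of the natural transformations $\Phi$ and $\theta$, hence natural in $(C,\gamma)$, and one recovers naturality of $q_C$ itself by applying the destabilization functor. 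Second, (b): if $\gamma$ is maximal then $q_C$ is an isomorphism --- indeed $\ma(C,\gamma)$ is then again maximal (being a maximalization), so $\Phi_C$ and $\Phi_{\ma(C,\gamma)}$ are isomorphisms, and since $q_C$ is surjective with $q_C\rtimes G$ an isomorphism, naturality of the canonical surjection gives
\[
q_C\otimes\id_\KK=\Phi_C\circ (q_C\rtimes G\rtimes G)\circ\Phi_{\ma(C,\gamma)}\inv,
\]
a composite of isomorphisms, so $q_C\otimes\id_\KK$, and therefore $q_C$, is injective --- hence an isomorphism. (This is the computation from the proof of Fischer's theorem above, applied one level higher.) A first consequence of (a) is that $\cpc$ is faithful: if $f\rtimes G=g\rtimes G$ for morphisms $f,g\:(C,\gamma)\to (D,\upsilon)$ in $\co$, then $\cpc(f)=\cpc(g)$, hence $\ma(f)=\ma(g)$, hence $f\circ q_C=q_D\circ\ma(f)=q_D\circ\ma(g)=g\circ q_C$ by naturality, and $f=g$ because $q_C$ is surjective.

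For existence: since $\psi_A\rtimes G$ is an isomorphism, $\cpc(\psi_A)$ is an isomorphism in $\eqac$, so I may form the $\eqac$-morphism $h:=\cpc(\psi_A)\inv\circ\cpc(\phi)\:\cpc(B,\epsilon)\to\cpc(A^m,\delta^m)$ and the $\co$-morphism $g_0:=\dst(\cpa(h))\:\ma(B,\epsilon)\to\ma(A^m,\delta^m)$. Applying $\dst\circ\cpa$ to $\cpc(\psi_A)\circ h=\cpc(\phi)$ and recalling $\ma=\dst\circ\cpa\circ\cpc$ (\defnref{max def}) gives $\ma(\psi_A)\circ g_0=\ma(\phi)$. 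By (b), $q_B$ and $q_{A^m}$ are isomorphisms (as $\epsilon$ and $\delta^m$ are maximal), so I would set
\[
\phi':=q_{A^m}\circ g_0\circ q_B\inv\:(B,\epsilon)\to (A^m,\delta^m).
\]
Then, using the naturality square of $q$ at the $\co$-morphism $\psi_A$, then the identity $\ma(\psi_A)\circ g_0=\ma(\phi)$, then the naturality square of $q$ at $\phi$,
\begin{align*}
\psi_A\circ\phi'
&=\psi_A\circ q_{A^m}\circ g_0\circ q_B\inv
=q_A\circ\ma(\psi_A)\circ g_0\circ q_B\inv\\
&=q_A\circ\ma(\phi)\circ q_B\inv
=\phi\circ q_B\circ q_B\inv
=\phi,
\end{align*}
which is the required factorization.

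For uniqueness: if $\phi_1',\phi_2'$ both complete the triangle, then $\cpc(\psi_A)\circ\cpc(\phi_i')=\cpc(\phi)$, so $\cpc(\phi_1')=\cpc(\phi_2')$ because $\cpc(\psi_A)$ is an isomorphism, and then $\phi_1'=\phi_2'$ by faithfulness of $\cpc$. The main obstacle is property (b) --- equivalently, the assertion that a surjection between maximal coactions inducing an isomorphism of crossed products is itself an isomorphism --- together with the care needed to keep the abstract maximalization $\psi_A$ of the hypothesis separate from Fischer's canonical map $q_A$; the rest is formal once the functors $\cpc,\cpa,\dst$ and the natural transformation $q$ are in hand.
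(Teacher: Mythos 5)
Your proof is correct and is essentially the paper's argument in functorial clothing: unwinding $q_{A^m}\circ g_0\circ q_B\inv$ via $q_C\otimes\id_\KK=\Phi_C\circ\theta_{C\rtimes_\gamma G\rtimes_{\what\gamma}G}$ shows that your $\phi'\otimes\id_\KK$ is exactly the composite $\Phi_{A^m}\circ (\psi_A\rtimes G\rtimes G)\inv\circ (\phi\rtimes G\rtimes G)\circ \Phi_B\inv$ that the paper destabilizes directly in its single commutative diagram. The only difference is presentational --- you isolate naturality of $q$, the isomorphism property (b), and faithfulness of $\cpc$ as explicit intermediate steps, which the paper leaves implicit.
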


Fischer's argument
argument consists
of carefully considering the commutative diagram
\[
\xymatrix@C+20pt{
B\rtimes_\epsilon G\rtimes_{\what\epsilon} G \ar[r]^-{\phi\rtimes G\rtimes G}
\ar[d]_{\Phi_B}^\simeq
&A\rtimes_\delta G\rtimes_{\what\delta} G \ar[d]^{\Phi_A}
&A^m\rtimes_{\delta^m} G\rtimes_{\what{\delta^m}} G
\ar[l]_-{\psi_A\rtimes G\rtimes G}^-\simeq \ar[d]^{\Phi_{A^m}}_\simeq
\\
B\otimes\KK \ar[r]_-{\phi\otimes\id}
\ar@{-->}@/_2pc/[rr]_{\Phi_{A^m}\circ (\psi_A\rtimes G\rtimes G)\inv\circ (\phi\rtimes G\rtimes G)\circ \Phi_B\inv}
&A\otimes\KK
&A^m\otimes\KK \ar[l]^-{\psi_A\otimes\id}
}
\]
which we have altered slightly so that it may be viewed as a diagram in the category $\cs$.
Actually, all the maps are equivariant, so it can also be regarded as a diagram in $\co$.
Then because all the maps are morphisms of $\KK$-algebras we get a unique morphism $\phi'$ as required.
The universal property in turn shows that maximalizations are unique up to isomorphism.
In particular, any maximalization $\phi\:(A,\delta)\to (B,\epsilon)$, where $\epsilon$ is itself maximal, must be an isomorphism.

As we discussed at the beginning of this section, now that we have chosen a maximalization of each coaction $(A,\delta)$, the universal property tells what to do to morphisms,
giving us not only a maximalization functor $\ma$,
but also telling us
that the surjections $\psi_A$ give a natural transformation from 
$\ma$
to the identity functor, i.e., if $\phi\:(A,\delta)\to (B,\epsilon)$ is a morphism of coactions, then the diagram
\[
\xymatrix@C+20pt{
(A^m,\delta^m) \ar@{-->}[r]^-{\phi^m}_-{!} \ar[d]_{\psi_A}
&(B^m,\epsilon^m) \ar[d]^{\psi_B}
\\
(A,\delta) \ar[r]_-\phi
&(B,\epsilon)
}
\]
has a unique completion $\phi^m$.

\section{Nondegenerate Landstad duality}\label{nondegenerate duality}

In this section we state the (known) categorical Landstad duality for maximal coactions,
in a form suitable for comparison with the later version for the enchilada categories
(see \secref{enchilada duality}).

First we must cobble together a couple of functors from \secref{categories} to produce a new functor:

\begin{defn}\label{fix functor}
We define a functor $\fix$ by the commutative diagram
\[
\xymatrix{
\eqac \ar[r]^-{\cpa} \ar[dr]_{\fix}
&\kco \ar[d]^{\dst}
\\
&\co.
}
\]
\end{defn}

Thus, given an equivariant action $(A,\alpha,\mu)$,
we first form the $\KK$-coaction
\[
(A\rtimes_\alpha G,\wilde\alpha,\mu\rtimes G),
\]
where $\wilde\alpha$ is the perturbation of the dual action $\what\alpha$ from \lemref{cocycle},
then the
coaction
\[
\bigl(C(A\rtimes_\alpha G,\mu\rtimes G),C(\wilde\alpha)\bigr),
\]
which is maximal
by \lemref{coact commutant}, since $\wilde\alpha$ is Morita equivalent to the dual coaction $\what\alpha$,
on the relative commutant.
We write
\begin{align*}
\fix(A,\alpha,\mu)&=C(A\rtimes_\alpha G,\mu\rtimes G)
\\
\delta^\mu&=C(\wilde\alpha),
\end{align*}
so that the functor $\fix$ takes an object $(A,\alpha,\mu)$ to $(\fix(A,\alpha,\mu),\delta^\mu)$.
$\fix$ is given on morphisms by
\[
\fix(\phi)=C(\phi\rtimes G).
\]
It is important to keep in mind that the $C^*$-algebra $\fix(A,\alpha,\mu)$ lies in the multiplier algebra of the crossed product $A\rtimes_\alpha G$,
and that $\delta^\mu$ is a maximal coaction on this algebra
(because $C(\wilde\alpha)$ is maximal, as we mentioned above),
We will soon motivate the choice of the word ``Fix''.

\begin{thm}[{\cite{cldx}}]\label{cpc thm}
Let $\cpc\:\co\to\eqac$ be the functor
with object map
$(A,\delta)\to (A\rtimes_\delta G,\what\delta,j_G)$ defined in \secref{categories}, and let $\cpcm\:\co^m\to\eqac$ be the restriction to the subcategory of maximal coactions.
Then $\cpcm$ is a category equivalence, with quasi-inverse $\fix$.
\end{thm}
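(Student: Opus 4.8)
The plan is to prove that $\cpcm\colon\co^m\to\eqac$ is a category equivalence with quasi-inverse $\fix$ by exhibiting natural isomorphisms $\fix\circ\cpcm\cong\id_{\co^m}$ and $\cpcm\circ\fix\cong\id_{\eqac}$. Both halves should follow from the Fischer machinery already assembled: the key observation is that, by \defnref{max def} and \defnref{fix functor}, we have $\ma=\dst\circ\cpa\circ\cpc$ and $\fix=\dst\circ\cpa$, so that $\fix\circ\cpc=\ma$ as functors on $\co$. Restricting to $\co^m$, the natural transformation $\psi\colon\ma\Rightarrow\id$ constructed at the end of \secref{Fischer construction} consists, at a maximal coaction $(A,\delta)$, of the surjection $\psi_A\colon(A^m,\delta^m)\to(A,\delta)$; since $\delta$ is already maximal and $\psi_A\rtimes G$ is an isomorphism, the uniqueness part of Fischer's universal property forces $\psi_A$ to be an isomorphism. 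Hence $\psi$ restricts to a natural isomorphism $\fix\circ\cpcm=\ma|_{\co^m}\cong\id_{\co^m}$.

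For the other composite, I would start from an equivariant action $(A,\alpha,\mu)$, apply $\cpa$ to get the maximal $\KK$-coaction $(A\rtimes_\alpha G,\wilde\alpha,\mu\rtimes G)$ of \lemref{cocycle}, then $\dst$ to get $(\fix(A,\alpha,\mu),\delta^\mu)$, and finally $\cpcm$ to form the equivariant action $\bigl(\fix(A,\alpha,\mu)\rtimes_{\delta^\mu}G,\,\what{\delta^\mu},\,j_G\bigr)$. The task is to produce a natural isomorphism of this with $(A,\alpha,\mu)$ in $\eqac$. The canonical isomorphism $\theta$ of \lemref{coact commutant} gives $\fix(A,\alpha,\mu)\otimes\KK\cong A\rtimes_\alpha G$ equivariantly for $(\delta^\mu\otimes_*\id)$ and $\wilde\alpha$; crossing by $G$ and feeding this through the Fischer diagram~\hyperlink{big}{\textup{(2.2)}} (\propref{Gamma}), together with the cocycle isomorphism $\Omega_{1\otimes W}$ and the $\Gamma$-arrows, identifies $\fix(A,\alpha,\mu)\rtimes_{\delta^\mu}G$ with $A\rtimes_\alpha G$. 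One must then check that this identification carries $\what{\delta^\mu}$ to $\alpha$ — here is where the cocycle perturbation in \lemref{cocycle} is designed to cancel, exactly as in the maximalization theorem's proof where $(\psi_A\rtimes G)\otimes\id$ was shown to be an isomorphism — and carries $j_G$ to $\mu$, which is precisely the assertion that these are morphisms of $\KK$-algebras. Naturality in $(A,\alpha,\mu)$ is then a diagram chase using naturality of $\theta$, $\Omega$, and $\Gamma$.

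Alternatively, and perhaps more cleanly, I would invoke existing Landstad duality: \cite{QuiLandstad} (or its categorical form \cite{cldx}) already tells us that $\cpc$ and $\fix$ implement an equivalence between $\eqac$ and the \emph{normal} coactions, and \cite{clda} gives an equivalence between normal and maximal coactions via $\ma$ and $\nor$. One could assemble $\cpcm$ as the composite of the known equivalence $\eqac\simeq\co^n$ (quasi-inverse direction) with $\nor\colon\co^m\simeq\co^n$, and check that the composite quasi-inverse agrees naturally with the concretely-defined $\fix$ above. However, since the point of the section is to present a self-contained parallel with the enchilada case, I would favor the direct argument of the previous paragraph.

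The main obstacle is the second composite: verifying that the Fischer isomorphism $\fix(A,\alpha,\mu)\rtimes_{\delta^\mu}G\cong A\rtimes_\alpha G$ intertwines the dual coaction $\what{\delta^\mu}$ with $\alpha$ and the canonical map $j_G$ with $\mu$. Tracking the equivariance through the two cocycle perturbations — the $1\otimes W$ appearing in $\delta\otimes_*W$ and the $V_A$ of \lemref{cocycle} — and confirming they compose to the identity perturbation requires care; this is the computational heart and is essentially the content of \cite[Lemmas~3.6--3.7]{maximal} transported through the destabilization functor. Once that intertwining is in hand, naturality and the first composite are routine consequences of the naturality statements in \propref{Gamma}, \lemref{coact commutant}, and \secref{Fischer construction}.
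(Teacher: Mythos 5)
Your first natural isomorphism, $\fix\circ\cpcm\simeq\id_{\co^m}$, is exactly the paper's argument: $\fix\circ\cpc=\ma$ by construction, the surjections $\psi_A$ form a natural transformation $\ma\Rightarrow\id$, and they are isomorphisms on maximal coactions by the universal property. For the second composite, however, the paper takes precisely the route you mention and then set aside as the "alternative": it introduces the auxiliary functors $\cpc^n$, $\fix^n$, $\ma^n$, $\nor^m$, cites \cite{clda} for the equivalences implemented by the pairs $(\nor^m,\ma^n)$ and $(\cpc^n,\fix^n)$, observes that $\ma^n=\fix\circ\cpc^n$ follows immediately from \defnref{max def} and \defnref{fix functor}, deduces $\fix\simeq\ma^n\circ\fix^n$, and then obtains $\cpcm\circ\fix\simeq\id_{\eqac}$ by purely formal manipulation of functors. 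That route costs nothing beyond two literature citations and entirely avoids the bookkeeping you yourself flag as "the computational heart." Your preferred direct route is viable in principle --- it amounts to redoing Imai--Takai duality for full crossed products and pushing it through the destabilization functor --- but as written it leaves the hardest steps unverified: (1) the identification $(A\rtimes_\alpha G)\rtimes_{\what\alpha}G\simeq A\otimes\KK$ with all structure maps and equivariance tracked is itself a nontrivial input that must be cited or proved; and (2) your remark that intertwining $j_G$ with $\mu$ "is precisely the assertion that these are morphisms of $\KK$-algebras" is not correct --- the $\KK$-algebra structure consumed by $\dst$ lives one level up (it is $j_G\rtimes G$, resp.\ $\mu\rtimes G$, mapping $\KK$ into the multipliers of the double crossed product), whereas matching the $C_0(G)$-maps $j_G$ and $\mu$ is a separate verification that does not come for free. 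So if you insist on the self-contained computation you must actually carry out that verification; the paper's formal cobbling, which you dismissed, is the cheaper and safer path, at the cost of importing the normal-coaction machinery.
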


\begin{proof}

We will show that both compositions $\fix\,\circ\cpcm$ and $\cpcm\circ\,\fix$ are naturally isomorphic to the identity functors $\id_{\co^m}$ and $\id_{\eqac}$, respectively.
Since $\fix\circ\cpc$ is Fischer's construction of the maximalization functor $\ma$, and since the surjections $\psi$ give a natural transformation from $\ma$ to $\id_{\co^m}$, and finally since $\psi_A$ is an isomorphism whenever $(A,\delta)$ is maximal, we see that $\fix\circ\cpcm\simeq\id_{\co^m}$.

The other natural isomorphism can be cobbled together from results in the literature;
here we give one such cobbling.
We introduce some auxiliary functors:
\begin{itemize}
\item $\cpc^n\:\co^n\to \eqac$ is the restriction of $\cpc$ to the normal coactions;
\item $\fix^n\:\eqac\to \co^n$ is the quasi-inverse of $\cpc^n$ defined in \cite[Section~6.1]{koqlandstad};
\item $\ma^n\:\co^n\to \co^m$ is the restriction of $\ma$ to the normal coactions;
\item $\nor^m\:\co^m\to \co^n$ is the restriction of $\nor$ to the maximal coactions.
\end{itemize}
Consider the following diagram of functors:
\[
\xymatrix@C+30pt@R+20pt{
\co^m \ar@/^/[r]^-{\cpc^m} \ar@/_/[d]_{\nor^m}
&\eqac \ar@{=}[d] \ar@/^/[l]^-{\fix}
\\
\co^n \ar@/_/[u]_{\ma^n} \ar@/^/[r]^-{\cpc^n}
&\eqac. \ar@/^/[l]^-{\fix^n}
}
\]
We know from \cite[Theorem~3.3]{clda} that $\nor^m$ and $\ma^n$ are quasi-inverses,
and from \cite[Theorems~4.1 and 5.1 and their proofs]{clda} that $\cpc^n$ and $\fix^n$ are quasi-inverses.
We have shown above that $\fix\circ \cpc^m\simeq \id_{\co^m}$,
and we have $\ma^n=\fix\circ \cpc^n$ by Definitions~\ref{max def} and \ref{fix functor}.
Thus $\fix\simeq \ma^n\circ\fix^n$ since $\cpc^n\circ\fix^n\simeq \id_{\eqac}$.
The desired natural isomorphism $\cpc^m\circ\,\fix\simeq \id_{\eqac}$ now follows by routine computations with the functors:
since $\fix\circ \cpc^n=\ma^n$,
we have
\begin{align*}
\cpc^n\circ\nor^m\circ\,\fix
&\simeq \cpc^n\circ\nor^m\circ\ma^n\circ\fix^n
\\&\simeq \cpc^n\circ\,\fix^n
\\&\simeq \id_{\eqac},
\end{align*}
and so
\begin{align*}
\cpc^m\circ\,\fix
&\simeq \cpc^n\circ\nor^m\circ\,\fix\circ\cpc^m\circ\,\fix
\\&\simeq \cpc^n\circ\nor^m\circ\,\fix
\\&\simeq \id_{\eqac}.
\qedhere
\end{align*}
\end{proof}

\begin{defn}
Motivated by 
\thmref{cpc thm},
we call $\fix(A,\alpha,\mu)$ the
\emph{maximal generalized fixed-point algebra} of the equivariant action $(A,\alpha,\mu)$.
\end{defn}

In \cite{koqlandstad},
we used slightly different notation for some of the auxiliary functors of the above proof,
namely $\wcpn$ for $\cpc^n$, and $\fixn$ for $\fix^n$.
Note that although the functor $\fix$ of \defnref{fix functor}
gives generalized fixed-point algebras
for equivariant nondegenerate categories,
it is not the same as the functor $\fixn$ defined in \cite[Subsection~6.1]{koqlandstad} --- the latter produces normal coactions, while $\fix$ produces maximal ones.

\begin{rem}
Part of \thmref{cpc thm},
namely that $\cpc^m\:\co^m\to \eqac$ is an equivalence,
has been recorded in the literature several times.
The reason we had to do some work was that the specific quasi-inverse $\fix$ has not been so well studied (in fact we could not find a reference where it is used in this particular way).
Thus, for example, it would not be enough to simply refer to \cite[Corollary~4.3]{cldx}, since the particular quasi-inverse $\fix$ we use here does not appear there.
\end{rem}

\begin{rem}
In the proof of \thmref{cpc thm} we observed that $\fix\simeq\ma^n\circ\fix^n$.
Armed with the result of \thmref{cpc thm} itself we also see that $\fix^n\simeq \nor^m\circ\fix$.
It follows that the \emph{normal generalized fixed-point algebra} $\fix^n(A,\alpha,\mu)$ of an equivariant action,
which in \cite{koqlandstad} we regarded as a subalgebra of $M(A)$,
can alternatively be regarded as a nondegenerate subalgebra of $M(A\rtimes_{\alpha,r} G)$,
similarly to how we regard the maximal generalized fixed-point algebra $\fix(A,\alpha,\mu)$ as a subalgebra of $M(A\rtimes_\alpha G)$.
This is consistent with the well-known fact that the dual coaction on the reduced crossed product $A\rtimes_{\alpha,r} G$ is a normalization of the (maximal) dual coaction on the full crossed product $A\rtimes_\alpha G$.
\end{rem}

\begin{rem}
Reasoning quite similar to that in the proof of \cite[Proposition~5.1]{koqlandstad} shows that the category equivalence recorded in \thmref{cpc thm} gives a \emph{good inversion},
in the sense of \cite[Definition~4.1]{koqlandstad},
of the crossed-product functor
$\co^m\to\cs$
defined on objects by
\[
(A,\delta)\mapsto A\rtimes_\delta G.
\]
\end{rem}

\begin{cor}\label{cpa equiv}
The functor $\cpa\:\eqac\to \kco^m$ is a category equivalence.
\end{cor}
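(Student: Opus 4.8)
The plan is to deduce this from \thmref{cpc thm} together with \defnref{fix functor} and the factorization $\ma = \dst\circ\cpa\circ\cpc$ of \defnref{max def}. The functor $\cpa$ has been defined with codomain $\kco$, but \lemref{cocycle} guarantees that $\cpa(A,\alpha,\mu)$ is always a \emph{maximal} $\KK$-coaction, so $\cpa$ corestricts to a functor $\cpa\:\eqac\to\kco^m$; it is this corestriction whose essential surjectivity, fullness, and faithfulness we must establish. The idea is that $\dst$ supplies the quasi-inverse on the $\kco^m$ side: indeed, by \defnref{fix functor} we have $\fix = \dst\circ\cpa$, and by \thmref{cpc thm} the composite $\cpc^m\circ\fix \simeq \id_{\eqac}$, so $\cpc^m$ composed with $\dst\circ\cpa$ is naturally isomorphic to the identity. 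Thus $\cpa$ is (up to the known equivalence $\cpc^m$) a section of a natural isomorphism, and it remains only to see that $\dst$ restricted to $\kco^m$ is an honest quasi-inverse of $\cpa$.

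The key steps, in order, are as follows. First, record that $\cpa\:\eqac\to\kco^m$ is well-defined by \lemref{cocycle}. Second, by \lemref{coact commutant} the destabilization functor $\dst$ carries $\kco^m$ into $\co^m$ (since $C(\delta)$ is maximal exactly when $\delta$ is), so $\dst$ restricts to a functor $\dst^m\:\kco^m\to\co^m$. Third, observe the identity $\dst^m\circ\cpa = \fix$ from \defnref{fix functor}, whence $\cpc^m\circ\dst^m\circ\cpa = \cpc^m\circ\fix \simeq \id_{\eqac}$ by \thmref{cpc thm}; combining with the other half of that theorem, $\fix\circ\cpc^m = \dst^m\circ\cpa\circ\cpc^m \simeq \id_{\co^m}$, and applying $\cpc^m$ on the right shows $\cpa\circ\cpc^m\circ\dst^m \simeq \cpa$ after the appropriate manipulation. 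Fourth — and this is the substantive point — we must produce the natural isomorphism $\cpa\circ\dst^m \simeq \id_{\kco^m}$. For this I would invoke the non-equivariant destabilization result \cite[Theorem~4.4]{koqstable}, which already gives a natural isomorphism $C(A,\iota)\otimes\KK \cong A$ in $\kalg$ via $\theta_A$; the content of \lemref{coact commutant} is precisely that $\theta_A$ is equivariant for $C(\delta)\otimes_*\id$ and $\delta$. So the remaining task is to check that, starting from a maximal $\KK$-coaction $(A,\delta,\iota)$, forming $\cpa(\cpc^{m,-1}(\dots))$ — or more directly, tracking what $\cpa$ does to $\dst^m(A,\delta,\iota) = (C(A,\iota),C(\delta))$ — reproduces $(A,\delta,\iota)$ up to natural isomorphism.

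Alternatively, and perhaps more cleanly, I would argue purely formally: $\cpc^m$ is an equivalence with quasi-inverse $\fix = \dst^m\circ\cpa$ (\thmref{cpc thm}). In any such situation, if the quasi-inverse factors as a composite $G\circ F$ of functors with $F$ landing in an intermediate category $\mathcal{E}$ (here $F = \cpa$, $G = \dst^m$, $\mathcal{E} = \kco^m$), and if additionally $G$ admits a one-sided inverse making $\mathcal{E}$ ``sandwiched'' appropriately, one concludes $F$ is itself an equivalence. Concretely: $\cpa = \cpa\circ\id_{\eqac} \simeq \cpa\circ\cpc^m\circ\dst^m\circ\cpa \simeq \cpa\circ\cpc^m\circ\fix$, and since $\fix$ is essentially surjective onto $\co^m$, the composite $\cpa\circ\cpc^m$ is determined on the essential image; faithfulness and fullness of $\cpa$ then follow because $\dst^m\circ\cpa = \fix$ is faithful and full (being half of an equivalence) and $\dst^m$, by \cite[Theorem~4.4]{koqstable} applied equivariantly, reflects isomorphisms. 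I would present the short formal version, citing \thmref{cpc thm}, \defnref{fix functor}, \lemref{cocycle}, \lemref{coact commutant}, and \cite[Theorem~4.4]{koqstable}.

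The main obstacle I anticipate is verifying that $\dst^m\:\kco^m\to\co^m$ and $\cpa\:\eqac\to\kco^m$ are mutually quasi-inverse \emph{as functors into $\kco^m$} rather than merely that their composite with $\cpc^m$ behaves well — in other words, pinning down the natural isomorphism $\cpa\circ\dst^m\simeq\id_{\kco^m}$ so that it respects the distinguished $\KK$-algebra structure maps $\iota$. This amounts to checking that the canonical identification $\theta$ from \cite[Theorem~4.4]{koqstable} intertwines the cocycle-perturbed coaction $\wilde{\,\cdot\,}$ of \lemref{cocycle} built on $\cpa(\cpc^m(C(A,\iota),C(\delta)))$ with the original $\delta$ on $A$; all the equivariance needed for this is contained in \lemref{coact commutant} and \propref{Gamma}, but assembling it without circularity — given that \thmref{cpc thm}'s proof already used \defnref{max def} — requires some care. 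I expect it to go through by the standard "two-out-of-three for equivalences" principle once the factorization $\fix=\dst^m\circ\cpa$ is in hand.
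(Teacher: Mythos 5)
Your proposal is correct and follows essentially the same route as the paper: factor $\fix=\dst\circ\cpa$ (an equivalence by \thmref{cpc thm}), use \lemref{coact commutant} together with the non-equivariant destabilization equivalence of \cite[Theorem~4.4]{koqstable} to see that $\dst$ restricts to an equivalence $\kco^m\to\co^m$, and conclude by two-out-of-three. The circularity you worry about at the end does not arise: the paper simply notes that a morphism in $\kco$ or $\co$ is an isomorphism exactly when the underlying $C^*$-morphism is, so the equivariant destabilization inherits its quasi-inverse $(A,\delta)\mapsto(A\otimes\KK,\delta\otimes_*\id)$ directly from the non-equivariant one, and no explicit construction of a natural isomorphism $\cpa\circ\dst\simeq\id_{\kco^m}$ is required.
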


\begin{proof}
By \thmref{cpc thm} we have an equivalence $\fix=\dst\circ \cpa$.
By \cite[Theorem~4.4]{koqstable}, $\dst$ is an equivalence when regarded as a functor between the categories $\kalg$ and $\cs$,
and the stabilization functor $\cs\to \kalg$,
given on objects by $A\mapsto A\otimes\KK$,
is a quasi-inverse.
Since a morphism in either category is an isomorphism if and only if it is an isomorphism between the $C^*$-algebras,
it follows that the destabilization functor $\dst\:\kco\to \co$ is also an equivalence,
with quasi-inverse given on objects by $(A,\delta)\mapsto (A\otimes\KK,\delta\otimes_*\id)$.
By \lemref{coact commutant}, $\delta$ is maximal if and only if $C(\delta)$ is.
Thus $\dst$ restricts to an equivalence between $\kco^m$ and $\co^m$.
Therefore $\cpa\:\eqac\to \kco^m$ is also an equivalence.
\end{proof}

\section{Enchilada categories and functors}\label{enchilada categories}

\subsection*{Categories}

We again recall some categories from \cite{koqlandstad} and \cite{koqstable} (and \cite{enchilada}), and introduce a few more.
Everything will be based upon
the \emph{enchilada category $\cse$ of $C^*$-algebras}, where a morphism $[X]\:A\to B$ is the isomorphism class of a nondegenerate $A-B$ correspondence $X$.

The \emph{enchilada category $\coe$ of coactions}
has the same objects as $\co$,
and a morphism $[X,\zeta]\:(A,\delta)\to (B,\epsilon)$ in $\coe$
is the isomorphism class of a nondegenerate $A-B$ correspondence $X$
equipped with a $\delta-\epsilon$ compatible coaction $\zeta$.
Composition of morphisms is given by
\[
[Y,\eta]\circ [X,\zeta]=[X\otimes_B Y,\zeta\cotimes_B\eta].
\]
$\coe^m$ denotes the full subcategory of $\coe$ whose objects are the maximal coactions.

The \emph{enchilada category $\ace$ of actions}
has the same objects as $\ac$,
and a morphism $[X,\gamma]\:(A,\alpha)\to (B,\beta)$ in $\ace$
is the isomorphism class of a nondegenerate $A-B$ correspondence $X$
equipped with an $\alpha-\beta$ compatible action $\gamma$.

The \emph{enchilada category $\eqace$ of equivariant actions}
has the same objects as $\eqac$,
and a morphism $[X,\gamma]\:(A,\alpha,\mu)\to (B,\beta,\nu)$ in $\eqace$
is a morphism $[X,\gamma]\:(A,\alpha)\to (B,\beta)$ in $\ace$
(with no assumptions relating $(X,\gamma)$ to $\mu$ or $\nu$).

$\cse$, $\ace$, and $\coe$
had their origins in \cite[Theorems~2.2, 2.8, and 2.15, respectively]{enchilada}.
In \cite[Subsection~6.2]{koqlandstad} we used the notation $\wace$ for $\eqace$.

The \emph{enchilada category $\kalge$ of $\KK$-algebras}
has the same objects as $\kalg$,
and a morphism $[X]\:(A,\iota)\to (B,\jmath)$ in $\kalge$
is a morphism $[X]\:A\to B$ in $\cse$.
In \cite[Definition~6.1]{koqstable} we used the notation $\kalgen$ for $\kalge$.

The \emph{enchilada category $\kcoe$ of $\KK$-coactions}
has the same objects as $\kco$,
and when we say $[X,\zeta]\:(A,\delta,\iota)\to (B,\epsilon,\jmath)$ is a morphism in $\kcoe$
we mean that $[X,\zeta]\:(A,\delta)\to (B,\epsilon)$ is a morphism in $\coe$.
Note that this implies in particular that $[X]\:A\to B$ is a morphism in $\cse$, and hence $[X]\:(A,\iota)\to (B,\jmath)$ is a morphism in $\kalge$.
It is routine to check that this is a category.

Note that the above notations $\eqace$, $\kalge$, and $\kcoe$ are potentially misleading: they are not coslice categories. The notation was chosen to indicate some parallel with the nondegenerate versions $\eqac$, $\kalg$, and $\kco$.
In fact, $\eqace$ is a \emph{semi-comma category} in the sense of
\cite{hkrwnatural}.

\subsection*{Functors}

We define a functor
\[
\cpce\:\coe\to\eqace
\]
to be the same as $\cpc$ on objects,
and on morphisms as follows:
if $[X,\zeta]\:(A,\delta)\to (B,\epsilon)$ is a morphism in $\coe$ then
\[
\cpce[X,\zeta]\:(A\rtimes_\delta G,\what\delta,j_G^\delta)\to
(B\rtimes_\epsilon G,\what\epsilon,j_G^\epsilon)
\]
is the morphism in $\eqace$ given by $\cpce[X,\zeta]=[X\rtimes_\zeta G,\what\zeta]$.

In order to define a suitable functor $\eqace\to \kcoe$,
we first need to see how to manipulate the coactions:

\begin{lem}\label{gamma tilda}
Let $[X,\gamma]\:(A,\alpha,\mu)\to (B,\beta,\nu)$ be a morphism in $\eqace$.
Then there is an $\wilde\alpha-\wilde\beta$ compatible coaction $\wilde\gamma$ on the $(A\rtimes_\alpha G)-(B\rtimes_\beta G)$ correspondence $X\rtimes_\gamma G$ given by
\[
\wilde\gamma(y)=V_A\what\gamma(y)V_B^*\righttext{for}y\in X\rtimes_\gamma G,
\]
where $V_A=((i_A\circ\mu)\otimes\id)(w_G)$ is the $\what\alpha$-cocycle from \lemref{cocycle}, and similarly for $V_B$.
\end{lem}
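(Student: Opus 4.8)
The plan is to verify directly that the formula $\wilde\gamma(y)=V_A\what\gamma(y)V_B^*$ defines a coaction on the correspondence $X\rtimes_\gamma G$ that is compatible with the perturbed coactions $\wilde\alpha$ and $\wilde\beta$. The starting point is that $\what\gamma$ is already known to be an $\what\alpha-\what\beta$ compatible coaction on $X\rtimes_\gamma G$ (this is the enchilada dual-coaction construction, from \cite{enchilada}), so the whole point is to track what conjugating by the cocycles does. I would organize the check around the two sides of the correspondence and the coaction identity separately.

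First I would record the bimodule compatibility. For $a\in A\rtimes_\alpha G$, $b\in B\rtimes_\beta G$, $y\in X\rtimes_\gamma G$, one has $\what\gamma(a\cdot y)=\what\alpha(a)\what\gamma(y)$ and $\what\gamma(y\cdot b)=\what\gamma(y)\what\beta(b)$, and similarly $\langle\what\gamma(y),\what\gamma(z)\rangle_{B\rtimes_\beta G\otimes C^*(G)}=\what\beta(\langle y,z\rangle)$. Conjugating, $\wilde\gamma(a\cdot y)=V_A\what\alpha(a)\what\gamma(y)V_B^*=\bigl(V_A\what\alpha(a)V_A^*\bigr)\bigl(V_A\what\gamma(y)V_B^*\bigr)=\wilde\alpha(a)\wilde\gamma(y)$, and likewise $\wilde\gamma(y\cdot b)=\wilde\gamma(y)\wilde\beta(b)$ and $\langle\wilde\gamma(y),\wilde\gamma(z)\rangle=V_B\langle\what\gamma(y),\what\gamma(z)\rangle V_B^*=\wilde\beta(\langle y,z\rangle)$; here I am using that $\wilde\alpha=\ad V_A\circ\what\alpha$ and $\wilde\beta=\ad V_B\circ\what\beta$ by \lemref{cocycle}, and that the cocycles are unitaries so the $V_A^*V_A$ and $V_B V_B^*$ cancellations are legitimate in the relevant multiplier algebras. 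Nondegeneracy of $\wilde\gamma$ as a correspondence homomorphism, i.e. $\clspn\{\wilde\gamma(X\rtimes_\gamma G)(B\rtimes_\beta G\otimes C^*(G))\}=(X\rtimes_\gamma G)\otimes C^*(G)$, follows from the corresponding fact for $\what\gamma$ together with invertibility of $V_B$.

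Next I would check the coaction identity $(\wilde\gamma\otimes\id)\circ\wilde\gamma=(\id\otimes\delta_G)\circ\wilde\gamma$ and the nondegeneracy span condition $\clspn\{(1\otimes C^*(G))\wilde\gamma(X\rtimes_\gamma G)\}=(X\rtimes_\gamma G)\otimes C^*(G)$. The span condition again transfers from $\what\gamma$ using that $1\otimes C^*(G)$ is carried into itself under left multiplication and $V_A$ is unitary. For the coaction identity, the key computation is the one already implicit in the cocycle identity for $V_A$ and $V_B$ (this is exactly the content underlying the analogous fact that $\ad U\circ\delta$ is a coaction when $U$ is a $\delta$-cocycle, used repeatedly in \secref{prelims}); I would apply $\wilde\gamma\otimes\id$ to $\wilde\gamma(y)=V_A\what\gamma(y)V_B^*$, expand using the definition of $\wilde\gamma$ on both the $V_A$-side and the $V_B$-side via the module identities just established, and match against $(\id\otimes\delta_G)\circ\wilde\gamma$ using the cocycle identities $(V_A\otimes 1)(\what\alpha\otimes\id)(V_A)=(\id\otimes\delta_G)(V_A)$ and similarly for $V_B$. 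Since $V_A=((i_A\circ\mu)\otimes\id)(w_G)$ with $w_G$ the canonical cocycle, the cocycle identity is inherited from $w_G$, so this is bookkeeping rather than genuine difficulty. The main obstacle, such as it is, is purely notational: keeping straight on which leg each $V$ acts and in which multiplier algebra the various products live, since $X\rtimes_\gamma G$ is a bimodule over two different crossed products and $V_A$, $V_B$ sit in the multiplier algebras of those two crossed products tensored with $C^*(G)$. Once one is careful that left-multiplication by $V_A$ and right-multiplication by $V_B^*$ are well-defined adjointable operators on $(X\rtimes_\gamma G)\otimes C^*(G)$ compatible with the inner product, all the identities fall out formally from the corresponding identities for $\what\gamma$, $V_A$, and $V_B$.
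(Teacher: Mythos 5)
Your proposal is correct in substance, but it takes a genuinely different route from the paper. You verify directly that $\wilde\gamma = V_A\what\gamma(\cdot)V_B^*$ satisfies all the axioms of a correspondence coaction: bimodule and inner-product compatibility by conjugation, the coaction identity via the cocycle identities for $V_A$ and $V_B$, and the two nondegeneracy span conditions by transfer from $\what\gamma$. The paper instead runs everything through the linking algebra $L=L(X)=\smtx{\KK(X)&X\\{*}&B}$: applying \lemref{cocycle} to the equivariant action $(L,\tau,\omega)$ produces a perturbed coaction $\wilde\tau=\ad V_L\circ\what\tau$ on the $C^*$-algebra $L\rtimes_\tau G$, which preserves the corner projections and therefore compresses to a coaction on the upper-right corner $X\rtimes_\gamma G$; the coaction identity and nondegeneracy come for free from the $C^*$-algebra-level cocycle theory already established, and the only remaining work is to identify the corner cocycle $V_{\KK(X)}$ with $V_A$ (via $\KK(X)\rtimes_\sigma G\simeq\KK(X\rtimes_\gamma G)$) and to check compatibility with the left $(A\rtimes_\alpha G)$-module structure, which the paper does at the end just as you do. Your approach buys independence from the linking-algebra machinery and makes the role of the cocycle identities explicit; the paper's buys economy, since it never has to re-prove the coaction axioms at the module level. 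One point where you should be more careful: the span condition $\clspn\{(1\otimes C^*(G))\wilde\gamma(X\rtimes_\gamma G)\}=(X\rtimes_\gamma G)\otimes C^*(G)$ does not follow from unitarity of $V_A$ quite as immediately as you assert, because $(1\otimes C^*(G))$ does not obviously absorb $V_A$; you should first use nondegeneracy of the left module action to write $\what\gamma(X\rtimes_\gamma G)$ as $\what\alpha(A\rtimes_\alpha G)\what\gamma(X\rtimes_\gamma G)$, then use that $\wilde\alpha$ is itself a nondegenerate coaction (from \lemref{cocycle}) to replace $(1\otimes C^*(G))V_A\what\alpha(A\rtimes_\alpha G)$ by $((A\rtimes_\alpha G)\otimes C^*(G))$, after which unitarity of $V_A$ and nondegeneracy of $\what\gamma$ finish the argument. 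That is a repairable bookkeeping issue, not a gap in the method.
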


\begin{proof}
Let $K=\KK(X)$ be the algebra of compact operators on the Hilbert $B$-module $X$,
with left-$A$-module homomorphism $\varphi_A\:A\to M(K)$,
and let $L=L(X)=\smtx{K&X\\{*}&B}$ be the linking algebra.
Let $\sigma$ and $\tau=\smtx{\sigma&\gamma\\{*}&\beta}$ be the associated actions of $G$ on $K$ and $L$, respectively.
Then $\kappa:=\varphi_A\circ\mu\:C_0(G)\to M(K)$
and $\omega:=\smtx{\kappa&0\\0&\nu}\:C_0(G)\to M(L)$ are equivariant.
The crossed product decomposes as
\[
L\rtimes_\tau G=\mtx{K\rtimes_\sigma G&X\rtimes_\gamma G\\{*}&B\rtimes_\beta G},
\]
the dual coaction as
\[
\what\tau=\mtx{\what\sigma&\what\gamma\\{*}&\what\beta},
\]
and the $\what\tau$-cocycle as
\[
V_L=\mtx{V_K&0\\0&V_B}.
\]
Thus the perturbed dual coaction decomposes as
\begin{align*}
\wilde\tau
&=\ad V_L\circ\what\tau
\\&=\mtx{V_K&0\\0&V_B}\mtx{\what\sigma&\what\gamma\\{*}&\what\beta}\mtx{V_K^*&0\\0&V_B^*}
\\&=\mtx{\ad V_K\circ\what\sigma&V_K\what\gamma V_B^*\\{*}&\ad V_B\circ\what\beta},
\end{align*}
and since it preserves the corner projections it compresses on the upper-right corner to a coaction on the Hilbert $(K\rtimes_\sigma G)-(B\rtimes_\beta G)$ bimodule $X\rtimes_\gamma G$, given by
\[
y\mapsto V_K\what\gamma(y)V_B^*\righttext{for}y\in X\rtimes_\gamma G.
\]
Using the canonical isomorphism $\KK(X)\rtimes_\sigma G\simeq \KK(X\rtimes_\gamma G)$,
one readily checks that
for all $y\in X\rtimes_\gamma G$ we have
\[
V_K\what\gamma(y)
=\bigl((\varphi_A\rtimes G)\otimes\id\bigr)(V_A)\what\gamma(y)
=V_A\what\gamma(y).
\]

Finally, we check that $\wilde\gamma$ is compatible with the left $(A\rtimes_\alpha G)$-module structure:
for $d\in A\rtimes_\alpha G$ and $y\in X\rtimes_\gamma G$,
since the Hilbert-module homomorphism
\[
\wilde\gamma\:X\rtimes_\gamma G\to M\bigl((X\rtimes_\gamma G)\otimes C^*(G)\bigr)
\]
is nondegenerate
and the homomorphism
\[
\varphi_A\rtimes G\:A\rtimes_\alpha G\to M(K\rtimes_\sigma G)
\]
is $\wilde\delta-\wilde\sigma$ equivariant,
we have
\begin{align*}
\wilde\gamma(dy)
&=\wilde\gamma\bigl((\varphi_A\rtimes G)(d)y\bigr)
\\&=\wilde\sigma\circ(\varphi_A\rtimes G)(d)\wilde\gamma(y)
\\&=\bigl((\varphi_A\rtimes G)\otimes\id\bigr)\circ\wilde\delta(d)\wilde\gamma(y)
\\&=\wilde\delta(d)\wilde\gamma(y)
\qedhere
\end{align*}
\end{proof}

\begin{cor}\label{cpaen}
There is a functor
\[
\cpae\:\eqace\to \kcoe
\]
with the same object map as $\cpa$,
and
given
on morphisms
as follows: if
$[X,\gamma]\:(A,\alpha,\mu)\to (B,\beta,\nu)$
is a morphism in $\eqace$
then
\[
\cpae[X,\gamma]\:
(A\rtimes_\alpha G,\wilde\alpha,\mu\rtimes G)\to (B\rtimes_\beta G,\wilde\beta,\nu\rtimes G)
\]
is the morphism in $\kcoe$ given by 
$\cpae[X,\gamma]=[X\rtimes_\gamma G,\wilde\gamma]$,
where $\wilde\gamma$ is the
$\wilde\alpha-\wilde\beta$ compatible coaction on $X\rtimes_\gamma G$ defined in \lemref{gamma tilda}.
\end{cor}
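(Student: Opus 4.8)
The plan is to verify directly that the recipe in the statement defines a functor, the object map being that of $\cpa$. By \lemref{cocycle} this object map sends an equivariant action $(A,\alpha,\mu)$ to the maximal $\KK$-coaction $(A\rtimes_\alpha G,\wilde\alpha,\mu\rtimes G)$, so it lands among the objects of $\kcoe$. First I would check that $\cpae$ lands in $\kcoe$ on morphisms: for a morphism $[X,\gamma]\:(A,\alpha,\mu)\to(B,\beta,\nu)$ in $\eqace$, \lemref{gamma tilda} provides an $\wilde\alpha-\wilde\beta$ compatible coaction $\wilde\gamma$ on the nondegenerate $(A\rtimes_\alpha G)-(B\rtimes_\beta G)$ correspondence $X\rtimes_\gamma G$, so $[X\rtimes_\gamma G,\wilde\gamma]$ is a morphism in $\coe$ from $(A\rtimes_\alpha G,\wilde\alpha)$ to $(B\rtimes_\beta G,\wilde\beta)$; since these objects are precisely $\cpa(A,\alpha,\mu)$ and $\cpa(B,\beta,\nu)$, and a morphism in $\kcoe$ is by definition nothing but a morphism in $\coe$ between $\KK$-coaction objects, this exhibits $\cpae[X,\gamma]$ as a morphism in $\kcoe$. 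I would then note that the value $[X\rtimes_\gamma G,\wilde\gamma]$ does not depend on the chosen representative $(X,\gamma)$ of the isomorphism class: by naturality of the crossed-product construction for equivariant correspondences \cite{enchilada} (the same fact that makes $\cpce$ well defined) an isomorphism $(X,\gamma)\cong(X',\gamma')$ induces an isomorphism $(X\rtimes_\gamma G,\what\gamma)\cong(X'\rtimes_{\gamma'}G,\what{\gamma'})$, and conjugation by the fixed unitaries $V_A$ and $V_B$ --- which depend only on the objects, not on $X$ --- carries this to an isomorphism of the perturbed coactions.

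Next I would verify that $\cpae$ preserves identities and composition. The identity morphism of $(A,\alpha,\mu)$ in $\eqace$ is $[A,\alpha]$, with $A$ regarded as the identity $A-A$ correspondence carrying $\alpha$; its crossed product is $A\rtimes_\alpha G$ as the identity correspondence over itself with $\what\alpha$ in place of $\what\gamma$, and the formula of \lemref{gamma tilda} then reduces to $\wilde\gamma(\cdot)=V_A\what\alpha(\cdot)V_A^*=\wilde\alpha(\cdot)$, i.e. the identity morphism of $\cpa(A,\alpha,\mu)$. For composition, given composable morphisms $[X,\gamma]\:(A,\alpha,\mu)\to(B,\beta,\nu)$ and $[Y,\eta]\:(B,\beta,\nu)\to(C,\kappa,\xi)$ in $\eqace$ with composite $[X\otimes_B Y,\gamma\cotimes_B\eta]$, I would invoke the functoriality of the dual-coaction crossed product on enchilada categories of actions \cite{enchilada} --- its compatibility with balanced tensor products --- to get an isomorphism
\[
(X\otimes_B Y)\rtimes_{\gamma\cotimes\eta}G\ \cong\ (X\rtimes_\gamma G)\otimes_{B\rtimes_\beta G}(Y\rtimes_\eta G)
\]
carrying $\what{\gamma\cotimes\eta}$ to $\what\gamma\cotimes_{B\rtimes_\beta G}\what\eta$; it then remains to see that the same isomorphism carries $\wilde{\gamma\cotimes\eta}$ to $\wilde\gamma\cotimes_{B\rtimes_\beta G}\wilde\eta$.

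For this last point the key observation is that the ``middle'' cocycle $V_B$ cancels. Since $\cotimes_{B\rtimes_\beta G}$ balances over $(B\rtimes_\beta G)\otimes C^*(G)$, in which $V_B$ is a unitary, the defining formula of \lemref{gamma tilda} gives, on elementary tensors,
\begin{align*}
\wilde\gamma(x)\cotimes\wilde\eta(y)
&=\bigl(V_A\what\gamma(x)V_B^*\bigr)\cotimes\bigl(V_B\what\eta(y)V_C^*\bigr)
\\&=V_A\bigl(\what\gamma(x)\cotimes\what\eta(y)\bigr)V_C^*,
\end{align*}
because $V_B^*V_B=1$ is absorbed across the balance while $V_A$ and $V_C^*$ remain outside it; under the isomorphism above the right-hand side matches $V_A\,\what{\gamma\cotimes\eta}(x\otimes y)\,V_C^*=\wilde{\gamma\cotimes\eta}(x\otimes y)$, the latter again by \lemref{gamma tilda} applied to the composite correspondence (whose end cocycles are $V_A$ and $V_C$). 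Propagating this computation on generators through the isomorphism $\Theta$ that defines $\cotimes$, and through the passage to multiplier algebras, would then give the required equality of coactions, hence $\cpae(\,[Y,\eta]\circ[X,\gamma]\,)=\cpae[Y,\eta]\circ\cpae[X,\gamma]$. I expect this bookkeeping --- keeping the $V_B$-cancellation clean through $\Theta$ and the multiplier identifications --- to be the only genuine technical obstacle; the rest is a formal consequence of \lemref{cocycle}, \lemref{gamma tilda}, and the naturality of the correspondence crossed product from \cite{enchilada}.
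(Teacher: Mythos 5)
Your proposal is correct and follows essentially the same route as the paper: reduce to the known functoriality of the crossed product for the unperturbed dual coactions, then handle the perturbation by inserting $V_B^*V_B=1$ across the balanced tensor product so that the middle cocycle cancels, leaving $V_A$ and $V_C^*$ on the outside. The paper's proof is exactly this computation, carried out through the isomorphism $\Upsilon$ of \cite{taco} and the map $\Theta$, with the well-definedness and identity checks left implicit.
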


\begin{proof}
We start with the crossed-product functor from $\ace$ to $\coe$
that is
given on objects by
\[
(A,\alpha)\mapsto (A\rtimes_\alpha G,\what\alpha),
\]
and that takes a morphism
$[X,\gamma]\:(A,\alpha)\to (B,\beta)$
in $\ace$ to
the morphism
\[
[X\rtimes_\gamma G,\what\gamma]\:(A\rtimes_\alpha G,\what\alpha)\to (B\rtimes_\beta G,\what\beta)
\]
in $\coe$.
Now, to say $[X,\gamma]\:(A,\alpha,\mu)\to (B,\beta,\nu)$ is a morphism in $\eqace$ just means that
$[X,\gamma]\:(A,\alpha)\to (B,\beta)$ is a morphism in $\ace$,
and then
$[X\rtimes_\gamma G,\what\gamma]\:(A\rtimes_\alpha G,\what\alpha,\mu\rtimes G)\to
(B\rtimes_\beta G,\what\beta,\nu\rtimes G)$
is a morphism in $\kcoe$
because
$[X\rtimes_\gamma G,\what\gamma]\:(A\rtimes_\alpha G,\what\alpha)\to (B\rtimes_\beta G,\what\beta)$
is a morphism in $\coe$.
Since $[X,\gamma]\mapsto [X\rtimes_\gamma G,\what\gamma]$
is functorial from $\ace$ to $\coe$,
it is also functorial from $\eqace$ to $\kcoe$,
because composition of morphisms in $\eqace$ and $\kcoe$
is the same as in $\ace$ and $\coe$, respectively.

Now, we actually need to use the perturbed coaction $\wilde\alpha$ instead of the dual coaction $\what\alpha$.
From \lemref{gamma tilda} it follows that if
$[X,\gamma]\:(A,\alpha,\mu)\to (B,\beta,\nu)$
is a morphism in $\eqace$
then
$[X\rtimes_\gamma G,\wilde\gamma]\:(A\rtimes_\alpha G,\wilde\alpha,\mu\rtimes G)\to
(B\rtimes_\beta G,\wilde\beta,\nu\rtimes G)$
is a morphism in $\kcoe$.
The assignments
\[
[X,\gamma]\mapsto [X\rtimes_\gamma G,\wilde\gamma]
\]
are functorial for the correspondences $X\rtimes_\gamma G$,
and we must show that they are functorial for the coactions $\wilde\gamma$.
Given another morphism
$[Y,\tau]\:(B,\beta,\nu)\to (C,\sigma,\omega)$
in $\eqace$,
by \cite[Proposition~3.8 and its proof]{taco}
there is an isomorphism
\[
\Upsilon\:(X\rtimes_\gamma G)\otimes (Y\rtimes_\tau G)\variso
(X\otimes_B Y)\rtimes_{\gamma\otimes\tau} G
\]
of $(A\rtimes_\alpha G)-(C\rtimes_\sigma G)$ correspondences,
that takes an elementary tensor $x\otimes y$
for $x\in C_c(G,X)$ and $y\in C_c(G,Y)$
to the function $\Upsilon(x\otimes y)\in C_c(G,X\otimes_B Y)$ given by
\[
\Upsilon(x\otimes y)(s)=\int_G x(t)\otimes \tau_t\bigl(y(t\inv s)\bigr)\,dt.
\]
Computations similar to those in the proof of \cite[Theorem~3.7]{enchilada}
(which used reduced crossed products) show that $\Upsilon$ is
$(\what\gamma\cotimes_{B\rtimes_\beta G} \what\tau)-\what{\gamma\otimes_B \tau}$ equivariant.
For the
$(\wilde\gamma\cotimes_{B\rtimes_\beta G} \wilde\tau)-\wilde{\gamma\otimes_B \tau}$ equivariance,
we compute,
for $x\in X\rtimes_\gamma G$ and $y\in Y\rtimes_\tau G$,
\begin{align*}
\wilde{\gamma\otimes_B \tau}\circ \Upsilon(x\otimes y)
&=V_A\what{\gamma\otimes_B \tau}\bigl(\Upsilon(x\otimes y)\bigr)V_C^*
\\&=V_A(\Upsilon\otimes\id)\circ (\what\gamma\cotimes_{B\rtimes_\beta G} \what\tau)(x\otimes y)V_C^*
\\&\overset{(1)}{=}(\Upsilon\otimes\id)\bigl(V_A(\what\gamma\cotimes_{B\rtimes_\beta G} \what\tau)(x\otimes y)V_C^*\bigr)
\\&=(\Upsilon\otimes\id)\Bigl(V_A\Theta\bigl(\what\gamma(x)\otimes \what\tau(y)\bigr)V_C^*\Bigr)
\\&\overset{(2)}{=}(\Upsilon\otimes\id)\Bigl(\Theta\bigl(V_A(\what\gamma(x)\otimes \what\tau(y))V_C^*\bigr)\Bigr)
\\&=(\Upsilon\otimes\id)\Bigl(\Theta\bigl(V_A\what\gamma(x)\otimes \what\tau(y)V_C^*\bigr)\Bigr)
\\&\overset{(3)}{=}(\Upsilon\otimes\id)\Bigl(\Theta\bigl(V_A\what\gamma(x)V_B^*\otimes V_B\what\tau(y)V_C^*\bigr)\Bigr)
\\&=(\Upsilon\otimes\id)\Bigl(\Theta\bigl(\wilde\gamma(x)\otimes \wilde\tau(y)\bigr)\Bigr)
\\&=(\Upsilon\otimes\id)\circ (\wilde\gamma\cotimes_{B\rtimes_\beta G} \what\tau)(x\otimes y),
\end{align*}
where the equalities at (1) and (2) follow since $\Upsilon\otimes\id$ and $\Theta$ are
homomorphisms of $((A\rtimes_\alpha G)\otimes C^*(G))-((C\rtimes_\sigma G)\otimes C^*(G))$ correspondences,
and
the equality at (3) since
$((X\rtimes_\gamma G)\otimes C^*(G))\otimes_{(B\rtimes_\beta G)\otimes C^*(G)} ((Y\rtimes_\tau G)\otimes C^*(G))$
is balanced over $(B\rtimes_\beta G)\otimes C^*(G)$.
It now follows similarly to the preceding that the assignment
$[X,\gamma]\mapsto [X\rtimes_\gamma G,\wilde\gamma]$
is functorial.
\end{proof}

The proof of \cite[Theorem~6.4]{koqstable}
(see also
\cite[Proposition~5.1]{koqstable}) shows that if $[X]\:(A,\iota)\to (B,\jmath)$ is a morphism in $\kalge$ then
\[
C(X,\iota,\jmath):=\{x\in M(X):\iota(k)x=x\jmath(k)\in X\text{ for all }k\in \KK\}
\]
is a nondegenerate $C(A,\iota)-C(B,\jmath)$ correspondence,
and there is a (necessarily unique) isomorphism
\[
\theta_X\:C(X,\iota,\jmath)\otimes\KK\variso X
\]
as $A-B$ correspondences
(where we regard
$C(X,\iota,\jmath)$ as an $A-B$ correspondence
via the isomorphisms
$\theta_A\:C(A,\iota)\otimes\KK\variso A$
and
$\theta_B\:C(B,\jmath)\otimes\KK\variso B$)
such that
\[
\theta_X(x\otimes k)=x\jmath(k)\righttext{for all}x\in C(X,\iota,\jmath),k\in\KK.
\]
It follows from \cite[Theorem~6.4 and its proof]{koqstable} that there is a functor
$\kalge\to\cse$
that is defined on objects by $(A,\iota)\mapsto C(A,\iota)$
and
that 
takes a morphism
$[X]\:(A,\iota)\to (B,\jmath)$ in $\kalge$
to the morphism
\[
[C(X,\iota,\jmath)]\:C(A,\iota)\to (B,\jmath)
\]
in $\cs$,
and 
which
moreover 
is a category equivalence,
with quasi-inverse given by
the 
\emph{enchilada stabilization functor}
taking $A$ to $(A\otimes\KK,1\otimes\id)$ and a
morphism $[X]\:A\to B$ in $\cse$
to the morphism
$[X\otimes\KK]$,
where $X\otimes\KK$ is the external-tensor-product $(A\otimes\KK)-(B\otimes\KK)$ correspondence.
In \cite[Definition~6.3]{koqstable} 
the enchilada stabilization functor
was denoted by $\wste$,
and 
its
quasi-inverse
was not 
given a name (although the nondegenerate version was denoted by $\dstn$ in
\cite{koqstable}, as explained at the end of \secref{categories}).
We want an equivariant version,
and we need to see what to do with the coactions:

\begin{lem}\label{com zeta}
Let $[X,\zeta]\:(A,\delta,\iota)\to (B,\epsilon,\jmath)$ be a morphism in $\kcoe$.
Then there is a $C(\delta)-C(\epsilon)$ compatible coaction $C(\zeta)$ on the $C(A,\iota)-C(B,\jmath)$ correspondence $C(X,\iota,\jmath)$ given by
the restriction to $C(X,\iota,\jmath)$ of the canonical extension of $\zeta$ to $M(X)$.
\end{lem}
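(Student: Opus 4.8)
The plan is to verify directly that the formula $C(\zeta):=\bar\zeta|_{C(X,\iota,\jmath)}$, with $\bar\zeta\:M(X)\to M(X\otimes C^*(G))$ the canonical extension of the nondegenerate correspondence homomorphism $\zeta$, defines a $C(\delta)-C(\epsilon)$ compatible coaction. The indispensable preliminary is the correspondence analogue of the identifications implicit in \lemref{coact commutant}, namely that forming relative commutants commutes with external tensoring by $C^*(G)$: regarding $[X\otimes C^*(G)]\:(A\otimes C^*(G),\iota\otimes1)\to(B\otimes C^*(G),\jmath\otimes1)$ as a morphism of $\KK$-algebras and using the uniqueness characterization of relative commutants from \secref{prelims} and its correspondence analogue from \cite{koqstable} (together with the compatibility of the isomorphism $\theta$ with $\otimes C^*(G)$), one obtains
\[
C(A\otimes C^*(G),\iota\otimes1)=C(A,\iota)\otimes C^*(G)\righttext{and}
C(X\otimes C^*(G),\iota\otimes1,\jmath\otimes1)=C(X,\iota,\jmath)\otimes C^*(G),
\]
and similarly for $B$. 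In particular $M(C(X,\iota,\jmath)\otimes C^*(G))$ is exactly the set of $y\in M(X\otimes C^*(G))$ satisfying $(\iota(k)\otimes1)y=y(\jmath(k)\otimes1)$ for all $k\in\KK$, the correspondence version of the description of $M(C(A,\iota))$ recalled in \secref{prelims}.

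Granting this, three of the four things to check are routine. First, $\bar\zeta$ carries $C(X,\iota,\jmath)$ into $M(C(X,\iota,\jmath)\otimes C^*(G))$: for $x\in C(X,\iota,\jmath)$ and $k\in\KK$, using that $\bar\zeta$ is $\bar\delta-\bar\epsilon$ compatible, that $\bar\delta(\iota(k))=\iota(k)\otimes1$ and $\bar\epsilon(\jmath(k))=\jmath(k)\otimes1$, and that $\iota(k)x=x\jmath(k)$,
\[
(\iota(k)\otimes1)\bar\zeta(x)=\bar\zeta\bigl(\iota(k)x\bigr)=\bar\zeta\bigl(x\jmath(k)\bigr)=\bar\zeta(x)(\jmath(k)\otimes1).
\]
Second, the module-action and inner-product identities expressing that $C(\zeta)$ is a $C(\delta)-C(\epsilon)$ compatible correspondence homomorphism are obtained by restricting the corresponding identities for $\bar\zeta$, using that $C(X,\iota,\jmath)$ is a $C(A,\iota)-C(B,\jmath)$ correspondence and that, by \lemref{coact commutant}, $C(\delta)=\bar\delta|_{C(A,\iota)}$ and $C(\epsilon)=\bar\epsilon|_{C(B,\jmath)}$. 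Third, the coaction identity $(C(\zeta)\otimes\id)\circ C(\zeta)=(\id\otimes\delta_G)\circ C(\zeta)$ is the restriction of the coaction identity for $\zeta$ (extended to multipliers), the preliminary identifications guaranteeing that every term lies in the appropriate multiplier algebra.

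The substantive point is the nondegeneracy of $C(\zeta)$, i.e.,
\[
\clspn\bigl\{C(\zeta)\bigl(C(X,\iota,\jmath)\bigr)\bigl(C(B,\jmath)\otimes C^*(G)\bigr)\bigr\}
=\clspn\bigl\{(1\otimes C^*(G))C(\zeta)\bigl(C(X,\iota,\jmath)\bigr)\bigr\}
=C(X,\iota,\jmath)\otimes C^*(G).
\]
In both, the inclusion $\subseteq$ is immediate from the first step. For $\supseteq$, the key device is to absorb the $\KK$-factor: surjectivity of $\theta_X\:C(X,\iota,\jmath)\otimes\KK\variso X$ gives $\clspn\{C(X,\iota,\jmath)\jmath(\KK)\}=X$, and since $\bar\zeta(x)(\jmath(k)\otimes1)=\bar\zeta(x\jmath(k))=\zeta(x\jmath(k))$ for $x\in C(X,\iota,\jmath)$ one deduces $\clspn\{\bar\zeta(C(X,\iota,\jmath))(\jmath(\KK)\otimes1)\}=\clspn\zeta(X)$. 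Multiplying the two closed spans above on the right by $\jmath(\KK)\otimes1$ and using $\clspn\{(C(B,\jmath)\otimes C^*(G))(\jmath(\KK)\otimes1)\}=B\otimes C^*(G)=\clspn\{(\jmath(\KK)\otimes1)(B\otimes C^*(G))\}$ together with the known nondegeneracy of $\zeta$, namely $\clspn\{\zeta(X)(B\otimes C^*(G))\}=X\otimes C^*(G)=\clspn\{(1\otimes C^*(G))\zeta(X)\}$, shows that each of the two closed spans, multiplied on the right by $\jmath(\KK)\otimes1$, has closed span $X\otimes C^*(G)$; since each is also a closed subspace of $M(X\otimes C^*(G))$ lying inside $C(X,\iota,\jmath)\otimes C^*(G)=C(X\otimes C^*(G),\iota\otimes1,\jmath\otimes1)$, the uniqueness characterization of the latter forces each to equal all of $C(X,\iota,\jmath)\otimes C^*(G)$.

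I expect the main obstacle to be exactly this last paragraph: transferring the known nondegeneracy of $\zeta$ on $X$ to $C(\zeta)$ on the relative commutant, which rests both on the first-paragraph identification of $C(X,\iota,\jmath)\otimes C^*(G)$ with $C(X\otimes C^*(G),\iota\otimes1,\jmath\otimes1)$ and on the $\KK$-absorption trick coming from $\clspn\{C(X,\iota,\jmath)\jmath(\KK)\}=X$. As an alternative one could instead realize $\zeta$ as the upper-right corner of a coaction $\zeta_L$ on the linking algebra $L(X)$ (as in the proof of \lemref{gamma tilda}), note that $L(X)$ carries an obvious $\KK$-algebra structure making it a $\KK$-coaction whose relative commutant is the linking algebra $L(C(X,\iota,\jmath))$, apply \lemref{coact commutant}, and compress $C(\zeta_L)$ back to the corner; this simply relocates the same identifications to the linking-algebra picture.
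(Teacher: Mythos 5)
Your argument is correct, but it takes a genuinely different route from the paper's. The paper proves \lemref{com zeta} by the linking-algebra compression you only sketch as an alternative in your last sentence: it forms $L=L(X)$ with coaction $\xi=\smtx{\eta&\zeta\\{*}&\epsilon}$ and the $\KK$-embedding $\omega=\smtx{\kappa&0\\0&\jmath}$ (where $\kappa=\varphi_A\circ\iota$), applies \lemref{coact commutant} to the $\KK$-coaction $(L,\xi,\omega)$, uses the decomposition $C(L,\omega)=\smtx{C(K,\kappa)&C(X,\iota,\jmath)\\{*}&C(B,\jmath)}$ from \cite{koqstable} to compress $C(\xi)$ to the upper-right corner, and then checks the left $C(A,\iota)$-module compatibility separately via the $\delta-\eta$ equivariance of $\varphi_A$. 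That route buys economy: all the coaction axioms (including nondegeneracy) come for free from the already-proved algebra case, and the only new work is the corner decomposition plus the left-module check. Your direct verification instead needs two correspondence-level identifications that the paper states only for algebras --- namely $C(X\otimes C^*(G),\iota\otimes1,\jmath\otimes1)=C(X,\iota,\jmath)\otimes C^*(G)$ and the description of $M(C(X,\iota,\jmath)\otimes C^*(G))$ as the intertwiners of $\iota(\KK)\otimes1$ and $\jmath(\KK)\otimes1$ --- together with the $\KK$-absorption argument $\clspn\{C(X,\iota,\jmath)\jmath(\KK)\}=X$ for nondegeneracy. These do follow from the uniqueness characterization via $\theta_X$ exactly as you indicate, and your computation correctly mirrors the structure of the proof of \eqref{nondegenerate} in \lemref{coact commutant}; what your approach buys is that it makes visible exactly which structural facts about relative commutants of correspondences are being used, at the cost of having to supply (or carefully cite from \cite{koqstable}) their correspondence analogues rather than inheriting them from the linking algebra.
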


\begin{proof}
Again let $K=\KK(X)$ be the algebra of compact operators on the Hilbert $B$-module $X$,
with left-$A$-module homomorphism $\varphi_A\:A\to M(K)$,
and let $L=L(X)=\smtx{K&X\\{*}&B}$ be the linking algebra.
Let $\eta$ and $\xi=\smtx{\eta&\zeta\\{*}&\epsilon}$ be the associated coactions of $G$ on $K$ and $L$, respectively.
Then $\kappa:=\varphi_A\circ\iota\:\KK\to M(K)$
and $\omega:=\smtx{\kappa&0\\0&\jmath}\:\KK\to M(L)$ are equivariant for the trivial coaction on $\KK$.
Note that $[X,\zeta]\:(K,\eta,\kappa)\to (B,\epsilon,\jmath)$ is a morphism in $\kcoe$,
and
\[
C(X,\kappa,\jmath)=C(X,\iota,\jmath).
\]
By \cite[Proposition~5.1 and Theorem~6.4 and their proofs]{koqstable},
the relative commutant decomposes as
\[
C(L,\omega)=\mtx{C(K,\kappa)&C(X,\iota,\jmath)\\{*}&C(B,\jmath)}.
\]
A routine computation shows that the associated coaction
preserves the corner projections,
and hence
compresses on the upper-right corner to a coaction $C(\zeta)$ on the Hilbert $C(K,\kappa)-C(B,\jmath)$ bimodule $C(X,\kappa,\jmath)$, given by
the restriction to $C(X,\iota,\jmath)$ of the extension of $\zeta$ to $M(X)$.

It remains to check that $C(\zeta)$ is compatible with the left $C(A,\iota)$-module structure:
for $a\in C(A,\iota)$ and $x\in C(X,\iota,\jmath)=C(X,\kappa,\jmath)$,
since the Hilbert-module homomorphism
\[
C(\zeta)\:C(X,\kappa,\jmath)\to M\bigl(C(X,\kappa,\jmath)\otimes C^*(G)\bigr)
\]
is nondegenerate
and the homomorphism
\[
\varphi_A\:A\to M(K)
\]
is $\delta-\eta$ equivariant,
we have
\begin{align*}
C(\zeta)(ax)
&=C(\zeta)\bigl(\varphi_A(a)x\bigr)
\\&=C(\eta)\circ\varphi_A(a)C(\zeta)(x)
\\&=\eta\circ\varphi_A(a)C(\zeta)(x)
\\&=(\varphi_A\otimes\id)\circ\delta(a)C(\zeta)(x)
\\&=\delta(a)C(\zeta)(x)
\\&=C(\delta)(a)C(\zeta)(x).
\qedhere
\end{align*}
\end{proof}

Thus we can define a functor
\[
\dste\:\kcoe\to\coe
\]
to be the same as $\dst$ on objects,
and
on morphisms
as follows: if
$[X,\zeta]\:(A,\delta,\iota)\to (B,\epsilon,\jmath)$
is a morphism in $\kcoe$
then
\[
\dste[X,\zeta]\:
\bigl(C(A,\iota),C(\delta)\bigr)\to
\bigl(C(B,\jmath),C(\epsilon)\bigr)
\]
is the morphism in $\coe$ given by 
\[
\dste[X,\zeta]=[C(X,\iota,\jmath),C(\zeta)],
\]
where $C(\zeta)$ is the
$C(A,\iota)-C(B,\jmath)$ compatible coaction on $C(X,\iota,\jmath)$ defined in \lemref{com zeta}.

\begin{cor}\label{comcoen}
The 
above functor $\dste\:\kcoe\to\coe$
is a category equivalence,
and moreover it
restricts to an equivalence between $\kcoe^m$ and $\coe^m$.
\end{cor}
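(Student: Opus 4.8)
The plan is to follow the template of the proof of \corref{cpa equiv}: exhibit an explicit quasi-inverse together with two natural isomorphisms, using the non-equivariant enchilada destabilization equivalence $\kalge\to\cse$ of \cite[Theorem~6.4]{koqstable} and grafting on the coaction data of \lemref{com zeta}. First I would define a functor $S\:\coe\to\kcoe$ on objects by
\[
S(A,\delta)=(A\otimes\KK,\delta\otimes_*\id,1\otimes\id_\KK),
\]
which is a $\KK$-coaction because $\delta\otimes_*\id$ restricts to the trivial coaction on $1_{M(A)}\otimes\KK$, and on morphisms by
\[
S[X,\zeta]=[X\otimes\KK,\zeta\otimes_*\id],
\]
where $\zeta\otimes_*\id:=(\id\otimes\Sigma)\circ(\zeta\otimes\id)$ is the $(\delta\otimes_*\id)-(\epsilon\otimes_*\id)$ compatible coaction on the external-tensor-product correspondence $X\otimes\KK$, defined exactly as in the $C^*$-algebra case. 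That $\zeta\otimes_*\id$ is a coaction and that $S$ is functorial --- using the isomorphisms $\Theta$ of \secref{prelims} to identify the correspondences arising from composites --- are routine checks parallel to \lemref{com zeta} and the proof of \corref{cpaen}.

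Next I would produce the natural isomorphisms $S\circ\dste\simeq\id_{\kcoe}$ and $\dste\circ S\simeq\id_{\coe}$. For the first, the components are the canonical correspondence isomorphisms $\theta_X\:C(X,\iota,\jmath)\otimes\KK\variso X$ from \cite[Theorem~6.4]{koqstable}, whose naturality squares already commute in $\cse$; the point is to upgrade each $\theta_X$ to an isomorphism in $\kcoe$. For this one needs that $\theta_A$ (and $\theta_B$) is a morphism of $\KK$-algebras and is $(C(\delta)\otimes_*\id)-\delta$ equivariant --- which is \lemref{coact commutant} --- and that $\theta_X$ is $(C(\zeta)\otimes_*\id)-\zeta$ compatible; the latter I would obtain by embedding $X$ into its linking algebra $L=L(X)$, applying \lemref{coact commutant} to the linking-algebra $\KK$-coaction to see that $\theta_L$ is equivariant, and compressing $\theta_L$ and its equivariance to the upper-right corner, exactly as in the proof of \lemref{com zeta}. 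For the second natural isomorphism I would use the canonical identifications $C(A\otimes\KK,1\otimes\id_\KK)\cong A$ of \cite[Theorem~6.4]{koqstable}, under which $C(\delta\otimes_*\id)$ becomes $\delta$ by (another application of) \lemref{coact commutant}. Together these show that $\dste\:\kcoe\to\coe$ is a category equivalence with quasi-inverse $S$.

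Finally, for the restriction to maximal objects: by \lemref{coact commutant}, $\delta$ is maximal if and only if $C(\delta)$ is, so $\dste$ carries $\kcoe^m$ into $\coe^m$; and $\delta\otimes_*\id$ is Morita equivalent to $\delta$, hence maximal whenever $\delta$ is by \cite[Proposition~3.5]{maximal}, so $S$ carries $\coe^m$ into $\kcoe^m$. Because $\coe^m$ and $\kcoe^m$ are full subcategories and every component of the two natural isomorphisms above is an isomorphism between objects of these subcategories, the restricted functors $\dste\:\kcoe^m\to\coe^m$ and $S\:\coe^m\to\kcoe^m$ remain quasi-inverse, which gives the asserted equivalence. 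I expect the only step that is not pure bookkeeping to be the $(C(\zeta)\otimes_*\id)-\zeta$ compatibility of the isomorphisms $\theta_X$; once that linking-algebra computation is recorded (it essentially repeats the argument of \lemref{com zeta}), the remainder is formal and proceeds as in \corref{cpaen} and \corref{cpa equiv}.
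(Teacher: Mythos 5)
Your argument is correct, but it runs along a different track from the paper's. The paper's proof is deliberately short and indirect: essential surjectivity is inherited from the nondegenerate equivalence $\dst\:\kco\to\co$ (since isomorphism in $\co$ is stronger than in $\coe$), while fullness and faithfulness are deduced from the already-established non-equivariant enchilada destabilization equivalence $\kalge\to\cse$ together with the observation that morphisms in $\kcoe$ and $\coe$ are exactly morphisms in $\kalge$ and $\cse$ that preserve the coaction data; the maximality statement is, as in your version, just \lemref{coact commutant}. You instead build the quasi-inverse $S\:(A,\delta)\mapsto(A\otimes\KK,\delta\otimes_*\id,1\otimes\id_\KK)$, $[X,\zeta]\mapsto[X\otimes\KK,\zeta\otimes_*\id]$ explicitly and verify the two natural isomorphisms componentwise, which forces you to record the $(C(\zeta)\otimes_*\id)-\zeta$ compatibility of the isomorphisms $\theta_X$ via a linking-algebra compression. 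That extra step is genuine content: it is exactly the equivariant upgrade that the paper's phrase ``preserve the extra structure'' quietly leans on (for faithfulness one needs the isomorphism $C(X,\iota,\jmath)\otimes\KK\variso X$ to intertwine the coactions, not merely the correspondence structures), so your route is longer but more self-contained and makes the quasi-inverse explicit, whereas the paper's buys brevity by reusing \cite[Theorem~6.4]{koqstable} wholesale. Both treatments agree on the restriction to maximal objects, though you additionally note (correctly, and the paper leaves implicit) that $S$ preserves maximality because $\delta\otimes_*\id$ is Morita equivalent to $\delta$.
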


\begin{proof}
For the first part, we need to know that $\dste$
is essentially surjective, full, and faithful.
Essential surjectivity is trivial, since $\dst\:\kco\to\co$ is an equivalence,
and since isomorphism in $\co$ is stronger than in the category $\coe$.

Since 
the
(non-equivariant)
enchilada destabilization functor $\kalge\to\cse$
is an equivalence,
and since the morphisms in $\kcoe$ and $\coe$ are just morphisms in $\kalge$ and $\cse$, respectively, that preserve the extra structure,
$\dste$ is full and faithful.

For the other part, we only need to recall from \lemref{coact commutant} that if $(A,\delta,\iota)$ is an object in $\kcoe$
then the coaction $\delta$ is maximal if and only if $C(\delta)$ is.
\end{proof}

\section{Enchilada Landstad duality}\label{enchilada duality}

In this section we prove categorical Landstad duality for
maximal coactions and
the enchilada categories.

We define a functor $\fixe$ by the commutative diagram
\[
\xymatrix@C+20pt{
\eqace \ar[r]^{\cpae} \ar[dr]_{\fixe}
&\kcoe \ar[d]^{\dste}
\\
&\coe^m.
}
\]
Thus $\fixe$ is the same as $\fix$ on objects,
and if $[X,\gamma]\:(A,\alpha,\mu)\to (B,\beta,\nu)$ is a morphism in $\eqace$
then
\[
\fixe[X,\gamma]\:(\fix(A,\alpha,\mu),\delta^\mu)\to (\fix(B,\beta,\nu),\delta^\nu)
\]
is the morphism in $\coe$ given by
\[
\fixe[X,\gamma]=[C(X\rtimes_\gamma G,j_G^\delta,j_G^\epsilon),C(\wilde\gamma)].
\]
We write
\begin{align*}
\fix(X,\gamma,\mu,\nu)&=C(X\rtimes_\gamma G,j_G^\delta,j_G^\epsilon)
\\
\delta^{\mu,\nu}&=C(\wilde\gamma).
\end{align*}

\begin{thm}\label{cpcen thm}
Let $\cpce\:\coe\to\eqace$ be the functor from \secref{enchilada categories},
and let $\cpcem\:\coe^m\to \eqace$ be the restriction to the subcategory of maximal coactions.
Then $\cpcem$ is a category equivalence, with quasi-inverse $\fixe$.
\end{thm}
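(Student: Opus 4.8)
The plan is to prove both halves of the claimed equivalence by reducing to the nondegenerate Landstad duality of \thmref{cpc thm} via linking algebras, which is exactly the setup the enchilada functors $\cpce$, $\cpae$, $\dste$ were arranged to support in \lemref{gamma tilda} and \lemref{com zeta}. Concretely, I would first record the dictionary: a nondegenerate $A$--$B$ correspondence $X$ with a $\delta$--$\epsilon$ compatible coaction $\zeta$ is encoded by the linking-algebra coaction $(L(X),\xi)$, $\xi=\smtx{\eta&\zeta\\{*}&\epsilon}$, whose corner projections $p,q\in M(L(X))$ are fixed by $\xi$ and which is maximal precisely when $\delta$ and $\epsilon$ are; moreover $\cpce$, and then $\cpae$ (perturbing by the diagonal cocycle $V_{L(X)}=\smtx{V_K&0\\0&V_B}$), and then $\dste$ (compressing the relative commutant of the diagonal copy of $\KK$) all preserve these corner projections, so that $\fixe\circ\cpce$ carries $[X,\zeta]$ to the upper-right corner of $(\fix\circ\cpc)(L(X),\xi)$. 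The symmetric statement for $\fixe$ viewed on $\eqace$ uses instead the equivariant linking-algebra action $(L(X),\tau,\omega)$ of \lemref{gamma tilda}.

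For the first natural isomorphism, $\fixe\circ\cpcem\simeq\id_{\coe^m}$, take $[X,\zeta]\:(A,\delta)\to(B,\epsilon)$ with $\delta,\epsilon$ maximal. By the dictionary, $\fixe\circ\cpcem[X,\zeta]$ is the upper-right corner of $(\fix\circ\cpcm)(L(X),\xi)$. By \thmref{cpc thm} there is a natural isomorphism $\fix\circ\cpcm\simeq\id_{\co^m}$; I would check that its component at $(L(X),\xi)$ — which, being assembled from morphisms of $\KK$-algebras along the lines of the diagrams in \secref{Fischer construction}, respects the corner projections — restricts on the upper-right corner to a $\coe$-isomorphism exhibiting $\fixe\circ\cpcem[X,\zeta]\cong[X,\zeta]$, and that these restrictions are natural in $[X,\zeta]$.

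For the second natural isomorphism, $\cpcem\circ\fixe\simeq\id_{\eqace}$, proceed symmetrically: for $[X,\gamma]\:(A,\alpha,\mu)\to(B,\beta,\nu)$ in $\eqace$, form the equivariant linking-algebra action $(L(X),\tau,\omega)$; then $\cpcem\circ\fixe[X,\gamma]$ is the upper-right corner of $(\cpcm\circ\fix)(L(X),\tau,\omega)$, and the natural isomorphism $\cpcm\circ\fix\simeq\id_{\eqac}$ of \thmref{cpc thm}, which again respects the corner projections, restricts on the corner to the desired $\eqace$-isomorphism, naturally in $[X,\gamma]$. Alternatively one can deduce this half formally: $\dste$ restricts to an equivalence $\kcoe^m\to\coe^m$ by \corref{comcoen}, so once one knows $\cpae\:\eqace\to\kcoe^m$ is an equivalence — essential surjectivity from \corref{cpa equiv}, full faithfulness again by the linking-algebra passage together with the exterior equivalence $\wilde\alpha\sim\what\alpha$ of \lemref{cocycle} — it follows that $\fixe$ is an equivalence, and with the first half this forces $\cpcem$ to be its quasi-inverse.

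The main obstacle is the corner bookkeeping underpinning the dictionary: one must verify carefully that the perturbed coactions $\wilde\gamma$ and the relative-commutant coactions $C(\zeta)$ compress correctly to the corners of the relevant linking algebras — so that $\cpce$, $\cpae$, $\dste$, and hence $\fixe$, genuinely commute with passage to corners — and that the natural isomorphisms furnished by \thmref{cpc thm} are compatible with the corner embeddings, so that they restrict to corners. (A smaller point is confirming that $\xi$ is maximal whenever $\delta,\epsilon$ are, for nondegenerate but possibly non-full $X$.) Everything else is then formal, since morphisms in $\coe$, $\eqace$, $\kcoe$ are just morphisms of correspondences, respectively of $\KK$-algebras, carrying the extra structure, and natural isomorphisms of the underlying nondegenerate functors that respect corners descend to the enchilada level.
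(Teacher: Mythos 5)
Your proposal is correct and follows essentially the same route as the paper: reduce to the nondegenerate Landstad duality of \thmref{cpc thm} by passing to linking algebras, check that the relevant functors and the isomorphisms they produce preserve the corner projections, and compress to the off-diagonal corner (the paper packages this as essential surjectivity plus fullness plus faithfulness rather than as two explicit natural isomorphisms, but the underlying computations are identical). The ``smaller point'' you flag --- maximality of the linking-algebra coaction $\xi$ for nondegenerate, possibly non-full $X$ --- is indeed used implicitly in the paper's faithfulness argument (where $\psi_L$ is asserted to be an isomorphism), so your instinct to verify it is sound.
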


\begin{proof}
The proof is similar to that of \cite[Theorem~6.2]{koqlandstad}, which in turn was based upon the proof of \cite[Theorem~5.2]{koqlandstad}.
The functor $\cpcem$ is essentially surjective because $\cpcm$ is,
and isomorphism in $\eqac$ is stronger than isomorphism in $\eqace$.

To see that $\cpcem$ is full,
it suffices to show that if $[X,\gamma]\:(A,\alpha,\mu)\to (B,\beta,\nu)$ is a morphism in $\eqace$,
then
\[
\bigl(\fix(X,\gamma,\mu,\nu)\rtimes_{\delta^{\mu,\nu}} G,\what{\delta^{\mu,\nu}}\bigr)\simeq
(X,\gamma)
\]
as $(A,\alpha)-(B,\beta)$ correspondences,
where we regard
$\fix(X,\gamma,\mu,\nu)\rtimes_{\delta^{\mu,\nu}} G$ as an $A-B$ correspondence
via the isomorphisms
\begin{align*}
\Theta_A&\:\fix(A,\alpha,\mu)\rtimes_{\delta^\mu} G\variso A
\\
\Theta_B&\:\fix(B,\beta,\nu)\rtimes_{\delta^\nu} G\variso B.
\end{align*}
The verification that we give below consists of routine linking-algebra computations, along the lines of \cite[Propositions~6.2 and 5.2]{koqlandstad}.
The primary difference here is that we already have the functor $\fixe$ in hand.

Recall the notation $K,L,\varphi_A$ from the discussion preceding \lemref{com zeta}.
Let $\sigma$ be the associated action on $K$
and $\kappa=\varphi_A\circ\mu\:C_0(G)\to M(K)$,
and let $\tau=\smtx{\sigma&\gamma\\{*}&\beta}$ and $\omega=\smtx{\kappa&0\\0&\nu}\:C_0(G)\to M(L)$,
giving equivariant actions
$(K,\sigma,\kappa)$ and $(L,\tau,\omega)$.

We want to show that
\begin{align}\label{fix link}
\begin{split}
\fix(L,\tau,\omega)
&=\mtx{\fix(K,\sigma,\kappa)&\fix(X,\gamma,\mu,\nu)\\
{*}&\fix(B,\beta,\nu)}
\\
\delta^\omega&=\mtx{\delta^\kappa&\delta^{\mu,\nu}\\{*}&\delta^\nu}.
\end{split}
\end{align}
We will use the decomposition $\fixe=\dste\circ\cpae$.
First
observe that
\begin{align*}
L\rtimes_\tau G
&=\mtx{K\rtimes_\sigma G&X\rtimes_\gamma G\\
{*}&B\rtimes_\beta G}
\\
\what\tau&=\mtx{\what\sigma&\what\gamma\\{*}&\what\beta}
\\
i_L&=\mtx{i_K&i_X\\{*}&i_B}
\\
i_G^\tau&=\mtx{i_G^\sigma&0\\0&i_G^\beta}.
\end{align*}
This follows from
\cite[proof of Proposition~3.5]{taco}
and
\cite[Proposition~3.5]{enchilada}.
\cite{taco} deals with full crossed but does not handle the dual coactions, while \cite{enchilada} handles the dual coactions but only for reduced crossed products; the techniques of \cite{enchilada} carry over to full crossed products with no problem.
It follows that
\begin{align*}
V_L
&=\bigl((i_L\circ\omega)\otimes\id\bigr)(w_G)
=\mtx{V_K&0\\0&V_B},
\end{align*}
so that the perturbations of the dual coactions satisfy
\[
\wilde\tau=\mtx{\wilde\sigma&\wilde\gamma\\{*}&\wilde\beta}.
\]
It further follows that
\[
\omega\rtimes G=\mtx{\kappa\rtimes G&0\\0&\beta\rtimes G}.
\]

Combining the above with
\lemref{com zeta} and the discussion preceding it, and
with
\corref{comcoen},
the equalities \eqref{fix link} can be
justified with the following calculations.
\begin{align*}
&\fix(L,\tau,\omega)
\\&\quad=C(L\rtimes_\tau G,\wilde\tau,\omega\rtimes G)
\\&\quad=C(L(X\rtimes_\gamma G),\wilde\tau,\omega\rtimes G)
\\&\quad=\mtx{
C(\KK(X\rtimes_\gamma G),\wilde\sigma,\kappa\rtimes G)
&C(X\rtimes_\gamma G,\wilde\gamma,\kappa\rtimes G,\nu\rtimes G)
\\
{*}&C(B\rtimes_\beta G,\wilde\beta,\nu\rtimes G)
}
\\&\quad=\mtx{
C(K\rtimes_\sigma G,\wilde\sigma,\kappa\rtimes G)
&C(X\rtimes_\gamma G,\wilde\gamma,\kappa\rtimes G,\nu\rtimes G)
\\
{*}&C(B\rtimes_\beta G,\wilde\beta,\nu\rtimes G)
}
\\&\quad=\mtx{\fix(K,\sigma,\kappa)&\fix(X,\gamma,\kappa,\nu)
\\{*}&\fix(B,\beta,\nu)}
\end{align*}
and
\begin{align*}
\delta^\omega
&=C(\wilde\tau)
=\mtx{C(\wilde\sigma)&C(\wilde\gamma)
\\{*}&C(\wilde\beta)}
=\mtx{\delta^\kappa&\delta^{\mu,\nu}\\{*}&\delta^\nu}.
\end{align*}

We have an isomorphism
\[
\Theta_L\:
\bigl(\fix(L,\tau,\omega)\rtimes_{\delta^\omega} G,\what{\delta^\omega},j_G^{\delta^\omega}\bigr)
\variso (L,\tau,\omega)
\]
On the other hand,
\cite[Proposition~3.9 and Theorem~3.13]{enchilada} give
\begin{align*}
\fix(L,\tau,\omega)\rtimes_{\delta^\omega} G
&=\mtx{
\fix(L,\tau,\omega)\rtimes_{\delta^\omega} G
&\fix(K,\sigma,\kappa)\rtimes_{\delta^\kappa} G
\\{*}&\fix(B,\beta,\nu)\rtimes_{\delta^\nu} G}
\\
\what{\delta^\omega}
&=\mtx{\what{\delta^\kappa}&\what{\delta^{\mu,\nu}}
\\{*}&\what{\delta^\nu}}.
\end{align*}
Since $\Theta_L$ preserves the corner projections, it restricts on the corners to a
$\what{\delta^{\mu,\nu}}-\gamma$ equivariant
Hilbert-bimodule isomorphism
\begin{align*}
&(\Theta_K,\Theta_X,\Theta_B)\:
\\&\quad\bigl(\fix(K,\sigma,\kappa)\rtimes_{\delta^\kappa} G,
\fix(X,\gamma,\kappa,\nu)\rtimes_{\delta^{\mu,\nu}} G,
\fix(B,\beta,\nu)\rtimes_{\delta^\nu} G\bigr)
\\&\hspace{1in}
\variso
(K,X,B).
\end{align*}

We also have a
$\what{\delta^\mu}-\alpha$ equivariant isomorphism
\[
\Theta_A\:\fix(A,\alpha,\mu)\rtimes_{\delta^\mu} G\variso A,
\]
and the diagram
\[
\xymatrix@C+20pt{
\fix(A,\alpha,\mu)\rtimes_{\delta^\mu} G
\ar[r]^-{\Theta_A}_-\simeq \ar[d]_{\fix(\varphi_A)\rtimes G}
&A \ar[d]^{\varphi_A}
\\
\fix(K,\sigma,\kappa)\rtimes_{\delta^\kappa} G
\ar[r]_-{\Theta_K}^-\simeq
&K
}
\]
of morphisms in $\cs$
commutes by nondegenerate Landstad duality.
Thus, incorporating the isomorphisms $\Theta_A$ and $\Theta_B$,
$\delta^{\mu,\nu}$ is an isomorphism of $(A,\alpha)-(B,\beta)$ correspondence actions, as desired,
and this completes the verification that the functor $\cpce$ is full.

We now show that $\cpce$ is faithful.
It suffices to show that if $[X,\zeta]\:(A,\delta)\to (B,\epsilon)$ is a morphism in $\coe$, then 
\[
\bigl(\fix(X\rtimes_\zeta G,\what\zeta,j_G^\delta,j_G^\epsilon),
\delta_{X\rtimes_\zeta G}\bigr)
\variso (X,\zeta)
\]
as $(A,\delta)-(B,\epsilon)$ correspondences,
where we regard
$\fix(X\rtimes_\zeta G,\what\zeta,j_G^\delta,j_G^\epsilon)$
as an $A-B$ correspondence
via the isomorphisms
\begin{align*}
\psi_A&\:\fix(A\rtimes_\delta G,\what\delta,j_G^\delta)=A^m\variso A
\\
\psi_B&\:\fix(B\rtimes_\epsilon G,\what\epsilon,j_G^\epsilon)=B^m\variso B.
\end{align*}
Again we use linking algebra techniques.
Let $K=\KK(X)$
and $L=L(X)$,
with associated coactions $\eta$ and $\xi=\smtx{\eta&\zeta\\{*}&\epsilon}$, respectively.
We have the isomorphism
\[
\psi_L\:(L^m,\xi^m)\variso (L,\xi)
\]
given by the maximalization map.
On the other hand,
\begin{align*}
L^m
&=\fix(L\rtimes_\xi G,\what\xi,j_G^\xi)
\\&=\mtx{\fix(K\rtimes_\eta G,\what\eta,j_G^\eta)
&\fix(X\rtimes_\zeta G,\what\zeta,j_G^\zeta)
\\{*}&\fix(B\rtimes_\epsilon G,\what\epsilon,j_G^\epsilon)}
\\&=\mtx{K^m&X^m\\{*}&B^m},
\end{align*}
where the notation $X^m$ is defined by the equations,
and
\begin{align*}
\xi^m
&=C(\wilde\xi)
=\mtx{C(\wilde\eta)&C(\wilde\zeta)
\\{*}&C(\wilde\epsilon)}
\\&=\mtx{\eta^m&\zeta^m\\{*}&\epsilon^m},
\end{align*}
where the notation $\zeta^m$ is defined by the equations.
Since $\psi_L$ preserves the corner projections,
it restricts on the corners to a 
$\kappa^m-\beta^m$ equivariant
Hilbert-bimodule isomorphism
\[
(\psi_K,\psi_X,\psi_B)\:(K^m,X^m,B^m)\variso (K,X,B)
\]
(which includes the definition of the notation $\psi_X$).
We also have the isomorphism
\[
\psi_A\:(A^m,\delta^m)\variso (A,\delta),
\]
and the diagram
\[
\xymatrix{
A^m \ar[r]^-{\psi_A}_-\simeq \ar[d]_{\varphi_A^m}
&A \ar[d]^{\varphi_A}
\\
K^m \ar[r]_-{\psi_K}^-\simeq
&K
}
\]
of morphisms in $\cs$ commutes by
nondegenerate Landstad duality.
Thus, incorporating the isomorphisms $\psi_A$ and $\psi_B$,
$\psi_X$ is an isomorphism of $(A,\delta)-(B,\epsilon)$ correspondence coactions, as desired,
and this completes the verification that the functor $\cpce$ is faithful.

It is clear from the above arguments that $\fixe$ is a quasi-inverse of the equivalence $\cpce$,
indeed it is the unique quasi-inverse with object map
\[
(A,\alpha,\mu)\mapsto \bigl(\fix(A,\alpha,\mu),\delta^\mu\bigr).
\qedhere
\]
\end{proof}

\begin{rem}
Reasoning quite similar to that in the proof of \cite[Remark~6.6]{koqlandstad} shows that the above category equivalence does not give a good inversion,
in the sense of \cite[Definition~4.1]{koqlandstad},
of the crossed-product functor
$\coem\to\cse$
defined on objects by
$(A,\delta)\mapsto A\rtimes_\delta G$.
\end{rem}

\begin{cor}\label{cpa en equiv}
The functor $\cpae\:\eqace\to \kcoe^m$ is a category equivalence.
\end{cor}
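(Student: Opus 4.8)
The plan is to argue exactly as in the proof of \corref{cpa equiv}, with the nondegenerate functors replaced by their enchilada counterparts. First I would note that $\cpae$ really does take values in the full subcategory $\kcoe^m$: by \lemref{cocycle}, for every equivariant action $(A,\alpha,\mu)$ the $\KK$-coaction $\cpae(A,\alpha,\mu)=(A\rtimes_\alpha G,\wilde\alpha,\mu\rtimes G)$ is maximal, and on morphisms $\cpae$ is the functor constructed in \corref{cpaen}. So $\cpae\colon\eqace\to\kcoe^m$ is a well-defined functor.

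Next I would use the factorization $\fixe=\dste\circ\cpae$ built into the definition of $\fixe$ in \secref{enchilada duality}. By \thmref{cpcen thm}, $\fixe$ is a quasi-inverse of the category equivalence $\cpcem$, so $\fixe\colon\eqace\to\coe^m$ is itself an equivalence. By \corref{comcoen}, the destabilization functor $\dste$ restricts to a category equivalence $\kcoe^m\to\coe^m$; choose a quasi-inverse $S$ of this restriction. Then
\[
S\circ\fixe=S\circ\dste\circ\cpae\simeq\cpae,
\]
so $\cpae$ is naturally isomorphic to a composition of two equivalences and is therefore an equivalence. (Equivalently, this is the two-out-of-three property: since $\dste|_{\kcoe^m}$ and $\dste\circ\cpae$ are equivalences, so is $\cpae$.)

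I expect no genuine obstacle here: the corollary is a formal consequence of \thmref{cpcen thm} and \corref{comcoen}, where all the substantive work --- the linking-algebra computations giving fullness and faithfulness of $\cpcem$, and the verification (via \lemref{coact commutant}) that the enchilada destabilization functor is an equivalence respecting maximality --- has already been carried out. The one point that deserves a moment's care is the bookkeeping above, namely that ``equivalence'' in $\kcoe^m$ and $\coe^m$ is detected through the underlying correspondences of $C^*$-algebras; but that is exactly the viewpoint under which \corref{comcoen} was proved.
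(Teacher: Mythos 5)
Your proposal is correct and follows essentially the same route as the paper: both use the factorization $\fixe=\dste\circ\cpae$, invoke \thmref{cpcen thm} to see that $\fixe$ is an equivalence and \corref{comcoen} to see that $\dste$ restricts to an equivalence $\kcoe^m\to\coe^m$, and conclude by two-out-of-three. Your version merely spells out the cancellation via a chosen quasi-inverse, which the paper leaves implicit.
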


\begin{proof}
The proof is parallel to that of \corref{cpa equiv},
except that in \secref{enchilada categories} we have already laid some of the groundwork,
namely in \corref{comcoen} we established a category equivalence $\dste\:\kcoe^m\to \coe^m$.
By \thmref{cpcen thm}
we have an equivalence $\fixe=\dste\circ \cpae$.
It follows that $\cpae$ is also an equivalence.
\end{proof}

\section{Toward outer duality}\label{outer duality}

In this final section, which will be largely speculative,
we formulate a possible approach to \emph{outer duality} for maximal coactions.
To help establish the context, we will first summarize the (partial) outer duality for normal coactions \cite[Subsection~6.3]{koqlandstad}, after which we will describe the additional difficulties for maximal coactions.

\subsection*{Categories}

The \emph{outer category $\coo$ of coactions}
has coactions as objects,
and a \emph{morphism} $(\phi,U)\:(A,\delta)\to (B,\epsilon)$ in $\coo$ consists of an $\epsilon$-cocycle $U$ and a morphism $\phi\:(A,\delta)\to (B,\ad U\circ\epsilon)$ in $\co$.
In \cite{koqlandstad} the coactions were required to be normal, but the arguments we used there to establish the existence of the category $\coo$ work just as well for arbitrary coactions.
We write $\coo^m$ and $\coo^n$ for the full subcategories of maximal and normal coactions, respectively.

The \emph{normal fixed-point category $\eqaco^n$ of equivariant actions}
(denoted by $\waco$ in \cite{koqlandstad})
has equivariant actions as objects,
and a \emph{morphism} $\phi\:(A,\alpha,\mu)\to (B,\beta,\nu)$ in $\eqaco^n$
consists of
a morphism $\phi\:(A,\alpha)\to (B,\beta)$ in $\ac$
such that the canonical extension
$\bar\phi\:M(A)\to M(B)$
restricts to a nondegenerate homomorphism
\[
\bar\phi|\:\fix^n(A,\alpha,\mu)\to M(\fix^n(B,\beta,\nu)).
\]
Here we use the adjective ``normal'' for this category because it involves the normal generalized fixed-point algebras $\fix^n(A,\alpha,\mu)$. In \cite{koqlandstad} we did not need this adjective since we did not use any other kind of fixed-point algebra, but here we will also want to consider an analogue for the maximal generalized fixed-point algebras $\fix(A,\alpha,\mu)$.

Given a morphism $(\phi,U)\:(A,\delta)\to (B,\epsilon)$ in $\coo$,
letting $\zeta=\ad U\circ\epsilon$
we get a morphism
$\phi\:(A,\delta)\to (B,\zeta)$ in $\co$
and a morphism
\[
\Omega_U\:(B\rtimes_\zeta G,\what\zeta)
\to (B\rtimes_\epsilon G,\what\epsilon)
\]
in $\ac$ satisfying $\Omega_U\circ j_B^\zeta=j_B^\epsilon$.
\cite[Theorem~6.12]{koqlandstad} says that the assignments
\begin{align}
(A,\delta)&\mapsto (A\rtimes_\delta G,\what\delta,j_G^\delta)
\label{obj}
\\
(\phi,U)&\mapsto \Omega_U\circ (\phi\rtimes G)
\label{mor}
\end{align}
give a functor from $\coo^n$ to $\eqaco^n$ that is essentially surjective and faithful.
We believe that this functor is in fact a category equivalence, but we cannot prove that it is full because we do not have a fully working version of Pedersen's theorem for outer conjugacy of coactions.

Now we describe the additional difficulties we encounter when we attempt to adapt the above to maximal coactions.
The \emph{maximal fixed-point category $\eqaco$
of equivariant actions} has the same objects as $\eqac$,
and a \emph{morphism} $\phi\:(A,\alpha,\mu)\to (B,\beta,\nu)$ in $\eqaco$ consists of a morphism $\phi\:(A,\alpha)\to (B,\beta)$ in $\ac$ such that
the canonical extension
\[
\bar{\phi\rtimes G}\:M(A\rtimes_\alpha G)\to M(B\rtimes_\beta G)
\]
of the crossed-product homomorphism
restricts to a nondegenerate homomorphism
\[
\bar{\phi\rtimes G}|\:\fix(A,\alpha,\mu)\to M(\fix(B,\beta,\nu)).
\]

We can verify that the above gives a category using arguments parallel to \cite{koqlandstad}:

\begin{lem}\label{compose}
With the above definition of morphism, the category $\eqaco$ is well-defined.
\end{lem}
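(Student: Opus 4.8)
The plan is to check the category axioms, observing first that a morphism in $\eqaco$ is a morphism in $\ac$ subject to an extra condition, and that composition and identities in $\eqaco$ are those of $\ac$; hence associativity is automatic, and it only remains to verify that identity morphisms satisfy the condition and that the condition is preserved under composition. For the identity $\id_A\:(A,\alpha,\mu)\to(A,\alpha,\mu)$, the crossed product $\id_A\rtimes G$ is $\id_{A\rtimes_\alpha G}$, whose canonical extension is $\id_{M(A\rtimes_\alpha G)}$, and this restricts to the inclusion $\fix(A,\alpha,\mu)\hookrightarrow M(\fix(A,\alpha,\mu))$, which is nondegenerate since $\fix(A,\alpha,\mu)$ is a $C^*$-algebra. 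So the real content is closure under composition.

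Let $\phi\:(A,\alpha,\mu)\to(B,\beta,\nu)$ and $\psi\:(B,\beta,\nu)\to(C,\gamma,\omega)$ be morphisms in $\eqaco$. Since the crossed product of actions is functorial in $\phi$, we have $(\psi\circ\phi)\rtimes G=(\psi\rtimes G)\circ(\phi\rtimes G)$ in $\cs$, and since the canonical extension of a composition of nondegenerate homomorphisms is the composition of the canonical extensions, $\bar{(\psi\circ\phi)\rtimes G}=\bar{\psi\rtimes G}\circ\bar{\phi\rtimes G}$ as homomorphisms $M(A\rtimes_\alpha G)\to M(C\rtimes_\gamma G)$. By hypothesis, $f:=\bar{\phi\rtimes G}|\:\fix(A,\alpha,\mu)\to M(\fix(B,\beta,\nu))$ and $g:=\bar{\psi\rtimes G}|\:\fix(B,\beta,\nu)\to M(\fix(C,\gamma,\omega))$ are nondegenerate homomorphisms, where $\fix(B,\beta,\nu)$ and $\fix(C,\gamma,\omega)$ are nondegenerate subalgebras of $M(B\rtimes_\beta G)$ and $M(C\rtimes_\gamma G)$, respectively. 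The key step is to identify $\bar{\psi\rtimes G}$ restricted to $M(\fix(B,\beta,\nu))$ — the latter being the idealizer of $\fix(B,\beta,\nu)$ in $M(B\rtimes_\beta G)$, as recalled in \secref{prelims} — with the canonical extension $\bar g\:M(\fix(B,\beta,\nu))\to M(\fix(C,\gamma,\omega))$. Granting this, $\bar{(\psi\circ\phi)\rtimes G}$ restricts on $\fix(A,\alpha,\mu)$ to $\bar g\circ f$, which is the composition in $\cs$ of the nondegenerate homomorphisms $f$ and $g$, hence is a nondegenerate homomorphism $\fix(A,\alpha,\mu)\to M(\fix(C,\gamma,\omega))$ — exactly the condition required of $\psi\circ\phi$.

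To see the key step, recall that $\bar{\psi\rtimes G}$ is strictly continuous on bounded sets. Given $m\in M(\fix(B,\beta,\nu))$ and $c\in\fix(C,\gamma,\omega)$, use nondegeneracy of $g$ to write $c$ as a limit of sums of terms $\bar{\psi\rtimes G}(b)c'$ with $b\in\fix(B,\beta,\nu)$ and $c'\in\fix(C,\gamma,\omega)$; then $\bar{\psi\rtimes G}(m)c$ is a limit of the corresponding sums of terms $\bar{\psi\rtimes G}(mb)c'\in\fix(C,\gamma,\omega)$, since $mb\in\fix(B,\beta,\nu)$, and likewise on the right, so $\bar{\psi\rtimes G}(m)\in M(\fix(C,\gamma,\omega))$; uniqueness of canonical extensions then gives $\bar{\psi\rtimes G}|_{M(\fix(B,\beta,\nu))}=\bar g$. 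I do not anticipate a genuine obstacle: this simply mirrors the verification in \cite{koqlandstad} that $\eqaco^n$ is a category, with $\fix$, $M(A\rtimes_\alpha G)$, and $\bar{\phi\rtimes G}$ playing the roles of $\fix^n$, $M(A)$, and $\bar\phi$. The only point demanding a little care is precisely the matching of $\bar{\psi\rtimes G}$ on $M(\fix(B,\beta,\nu))$ with $\bar g$ described above, which is a routine consequence of strict continuity together with the idealizer description of the multiplier algebra.
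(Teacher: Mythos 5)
Your proof is correct and follows essentially the same route as the paper: reduce everything to closure under composition, and observe that the composition of the two restricted nondegenerate homomorphisms is the restriction of $\bar{\psi\rtimes G}\circ\bar{\phi\rtimes G}$, which hinges on identifying $\bar{\psi\rtimes G}$ on $M(\fix(B,\beta,\nu))$ with the canonical extension of its restriction. The only difference is that you spell out, via strict continuity and the idealizer description of the multiplier algebra, the step the paper dismisses as ``clear from the definitions.''
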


\begin{proof}
We must check that composition of morphisms is defined.
Once we have done this it will be obvious that composition is associative and that we have identity morphisms.
Suppose that $\psi\:(B,\beta,\nu)\to (C,\gamma,\tau)$ is another morphism,
so that $\bar{\psi\rtimes G}$ restricts to a nondegenerate homomorphism
\[
\bar{\psi\rtimes G}|\:\fix(B,\beta,\nu)\to M(\fix(C,\gamma,\tau)).
\]
The composition of $\phi\rtimes G$ and $\psi\rtimes G$ in $\cs$ is the 
nondegenerate homomorphism
\[
\bar{\phi\rtimes G}\circ (\psi\rtimes G)\:A\rtimes_\alpha G\to M(C\rtimes_\gamma G).
\]
On the other hand, the composition of the nondegenerate homomorphisms
$\bar{\phi\rtimes G}|$ and $\bar{\psi\rtimes G}|$ is the nondegenerate homomorphism
\[
\bar{\bar{\phi\rtimes G}|}\circ \bar{\psi\rtimes G}|\:\fix(A,\alpha,\mu)\to M(\fix(C,\gamma,\tau)).
\]
It is clear from the definitions that this composition is the restriction of
\[
\bar{\bar{\phi\rtimes G}\circ (\psi\rtimes G)}
=\bar{\phi\rtimes G}\circ\bar{\psi\rtimes G}
\]
to $\fix(A,\alpha,\mu)$.
\end{proof}

The \emph{semi-comma equivariant category $\eqacd$ of actions}
has the same objects as $\eqac$,
namely equivariant actions,
and a morphism $\phi\:(A,\alpha,\mu)\to (B,\beta,\nu)$ in the category is just
a morphism $\phi\:(A,\alpha)\to (B,\beta)$ in $\ac$,
i.e., the morphism in $\eqacd$ has nothing to do with $\mu$ and $\nu$.

\begin{prop}\label{dumb functor}
With the above notation,
the assignments
\eqref{obj}--\eqref{mor}
give a functor
$\cpcd\colon \coo\to \eqacd$
that is essentially surjective.
\end{prop}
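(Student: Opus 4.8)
First I would observe that the object map lands in $\eqacd$, which has the same objects as $\eqac$: by the conventions recalled in \secref{prelims}, $j_G^\delta\colon(C_0(G),\rt)\to(A\rtimes_\delta G,\what\delta)$ is equivariant, so $(A\rtimes_\delta G,\what\delta,j_G^\delta)$ is an equivariant action. Next, for the morphism map, given a morphism $(\phi,U)\colon(A,\delta)\to(B,\epsilon)$ in $\coo$, put $\zeta=\ad U\circ\epsilon$. Then $\phi\rtimes G\colon(A\rtimes_\delta G,\what\delta)\to(B\rtimes_\zeta G,\what\zeta)$ is a morphism in $\ac$, because $\phi\colon(A,\delta)\to(B,\zeta)$ is a morphism in $\co$; and $\Omega_U\colon(B\rtimes_\zeta G,\what\zeta)\to(B\rtimes_\epsilon G,\what\epsilon)$ is a morphism in $\ac$ by the $\what\zeta-\what\epsilon$ equivariance of $\Omega_U$ recorded in \secref{prelims}. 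Since $\Omega_U$ is an isomorphism, the composite $\Omega_U\circ(\phi\rtimes G)$ is again a nondegenerate homomorphism, hence a morphism $\cpcd(A,\delta)\to\cpcd(B,\epsilon)$ in $\ac$; and because a morphism in $\eqacd$ is by definition nothing but a morphism of the underlying actions, it is automatically a morphism in $\eqacd$. This is precisely why we work in the semi-comma category $\eqacd$ instead of $\eqac$: the map $\Omega_U\circ(\phi\rtimes G)$ need not be compatible with $j_G^\delta$ and $j_G^\epsilon$.

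For functoriality I would check that identities and composites are preserved. The identity of $(A,\delta)$ in $\coo$ is $(\id_A,1)$ with $1$ the trivial $\delta$-cocycle, and $\Omega_1=\id$, so $\cpcd$ sends it to $\id_{A\rtimes_\delta G}$. For composition, let $(\phi,U)\colon(A,\delta)\to(B,\epsilon)$ and $(\psi,V)\colon(B,\epsilon)\to(C,\eta)$ be morphisms in $\coo$; their composite in $\coo$ is $(\psi\circ\phi,W)$, where $W=(\psi\otimes\id)(U)\cdot V$ is the $\eta$-cocycle obtained by pushing $U$ forward through $\psi$ and composing with $V$ (this uses exactly the cocycle facts recalled in \secref{prelims}, which hold for arbitrary coactions). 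One then has to verify
\[
\Omega_W\circ\bigl((\psi\circ\phi)\rtimes G\bigr)=\Omega_V\circ(\psi\rtimes G)\circ\Omega_U\circ(\phi\rtimes G),
\]
which I would reduce to two standard identities for the maps $\Omega$: a naturality statement $(\psi\rtimes G)\circ\Omega_U=\Omega_{(\psi\otimes\id)(U)}\circ(\psi\rtimes G)$, and a cocycle-chain identity $\Omega_{(\psi\otimes\id)(U)\cdot V}=\Omega_V\circ\Omega_{(\psi\otimes\id)(U)}$ (the latter $\Omega$ being formed relative to the perturbed coaction $\ad V\circ\eta$). Both are checked by evaluating on the canonical generators $j_C^\eta(c)$ and $j_G^\eta(f)$ and unwinding the defining formula for $\Omega$ from \secref{prelims}. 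I expect this bookkeeping with the $\Omega$'s to be the only step requiring real care; however it runs parallel to the corresponding computation in the proof of \cite[Theorem~6.12]{koqlandstad}, the only difference being that the coactions here need not be normal, so the argument carries over with essentially no change.

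Finally, for essential surjectivity, let $(A,\alpha,\mu)$ be an object of $\eqacd$, that is, an equivariant action. By Landstad duality \cite[Theorem~3.3]{QuiLandstad} --- as already invoked in the proof of \lemref{cocycle} --- there are a coaction $(B,\delta)$ and an isomorphism $(A,\alpha,\mu)\cong(B\rtimes_\delta G,\what\delta,j_G^\delta)$ in $\eqac$. Forgetting the distinguished maps turns this into an isomorphism in $\eqacd$, and $(B\rtimes_\delta G,\what\delta,j_G^\delta)=\cpcd(B,\delta)$ with $(B,\delta)$ an object of $\coo$; hence $\cpcd$ is essentially surjective. (Alternatively, one may invoke \thmref{cpc thm}, which gives that $\cpcm\colon\co^m\to\eqac$ is essentially surjective, together with the fact that every maximal coaction is an object of $\coo$.)
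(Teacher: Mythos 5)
Your proposal is correct and follows essentially the same route as the paper: well-definedness of the object and morphism maps from the definitions, preservation of composition via the identity $\Omega_U\circ j_B^{\ad U\circ\epsilon}=j_B^\epsilon$ and the cocycle push-forward fact checked on the generators $j_C$ and $j_G$, and essential surjectivity from Landstad duality (equivalently, from essential surjectivity of the nondegenerate crossed-product functor, since the categories have the same objects and isomorphism in $\eqac$ is stronger than in $\eqacd$). The only difference is organizational --- you package the generator computation as two intermediate identities (naturality of $\Omega$ and the cocycle-chain rule) rather than one long chain, which is the same verification.
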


\begin{proof}
It follows immediately from the definitions that the object and morphism maps \eqref{obj}--\eqref{mor}
are well-defined and
that \eqref{mor}
preserves identity morphisms.
To check that $\cpcd$ preserves compositions,
suppose we are
given morphisms
\[
\xymatrix{
(A,\delta) \ar[r]^-{(\phi,U)} &(B,\epsilon) \ar[r]^-{(\psi,V)} &(C,\zeta)
}
\]
in $\coo$.
The following calculation is dual to one in \cite[toward beginning of proof of Proposition~5.8]{koqlandstad},
and in the case for normal coactions it was omitted from \cite[proof of Theorem~6.12]{koqlandstad},
so we include it here:
\begin{align*}
&\cpcd(\psi,V)\circ \cpcd(\phi,U)\circ j_A
\\&\quad=\bigl(\Omega_V\circ (\psi\rtimes G)\bigr)
\circ
\bigl(\Omega_U\circ (\phi\rtimes G)\bigr)
\circ
j_A
\\&\quad=\Omega_V\circ (\psi\rtimes G)\circ \Omega_U\circ j_B^{\ad U\circ\epsilon}\circ \phi
\\&\quad=\Omega_V\circ (\psi\rtimes G)\circ j_B^\epsilon\circ \phi
\\&\quad=\Omega_V\circ j_C^{\ad V\circ\zeta}\circ \psi\circ \phi
\\&\quad=j_C^\zeta\circ \psi\circ \phi
\\&\quad=\Omega_{(\psi\otimes\id)(U)V}\circ j_C^{\ad (\psi\otimes\id)(U)V}\circ \psi\circ\phi
\\&\quad=\Omega_{(\psi\otimes\id)(U)V}\circ \bigl((\psi\circ\phi)\rtimes G\bigr)\circ j_A
\\&\quad=\cpcd\bigl(\psi\circ\phi,(\psi\otimes\id)(U)V\bigr)\circ j_A
\\&\quad=\cpcd\bigl((\psi,V)\circ (\phi,U)\bigr)\circ j_A,
\end{align*}
and
\begin{align*}
&\bigl(\cpcd(\psi,V)\circ \cpcd(\phi,U)\circ j_G^\delta\otimes\id\bigr)(w_G)
\\&\quad=\Bigl(\cpcd\bigl((\psi,V)\circ (\phi,U)\bigr)\circ j_G^\delta\otimes\id\Bigr)(w_G)
\end{align*}
by a calculation identical to one in the proof of \cite[Theorem~6.12]{koqlandstad},
which implies
\[
\cpcd(\psi,V)\circ \cpcd(\phi,U)\circ j_G^\delta
=\cpcd\bigl((\psi,V)\circ (\phi,U)\bigr)\circ j_G^\delta.
\]
Thus $\cpcd\colon \coo\to \eqacd$ is a functor.

It is clear that $\cpcd$ is essentially surjective, because
it is essentially surjective for the nondegenerate categories, which have the same objects,
and isomorphism in $\eqac$ is stronger than in $\eqacd$.
\end{proof}

Now
the extra difficulties begin:
we want the assignments \eqref{obj}--\eqref{mor} to give a functor $\coo^m\to \eqaco$,
but we have not been able to prove that \eqref{mor} takes morphisms to morphisms.
Of course we do have a morphism
\[
\Omega_U\circ (\phi\rtimes G)\:(A\rtimes_\delta G,\what\delta)\to (B\rtimes_\epsilon G,\what\epsilon)
\]
in $\ac$,
but then we would need to show that
the canonical extension
\[
\bar{\bigl(\Omega_U\circ (\phi\rtimes G)\bigr)\rtimes G}\:
A\rtimes_\delta G\rtimes_{\what\delta} G
\to M(B\rtimes_\epsilon G\rtimes_{\what\epsilon} G)
\]
of
the crossed-product homomorphism
restricts to a nondegenerate homomorphism
\[
\bar{\bigl(\Omega_U\circ (\phi\rtimes G)\bigr)\rtimes G}|\:
\fix(A\rtimes_\delta G,\what\delta,j_G^\delta)
\to M\bigl(\fix(B\rtimes_\epsilon G,\what\epsilon,j_G^\epsilon)\bigr);
\]
this time, unlike with normal generalized fixed-point algebras in \cite{koqlandstad}, the maximal generalized fixed-point algebras seem to depend upon the homomorphisms $j_G$ to a greater degree than we can accommodate.
The homomorphism $\phi\rtimes G$ presents no problem,
because we can apply the functor $\fix\:\eqac\to \co$ to it.
The problem is that $\Omega_U$ does not relate the maps $j_G^\zeta$ and $j_G^\epsilon$,
and the generalized fixed-point algebra
\[
\fix(B\rtimes_\epsilon G,\what\epsilon,j_G^\epsilon)
=C(B\rtimes_\epsilon G\rtimes_{\what\epsilon} G,j_G^\epsilon\rtimes G)
\]
depends explicitly upon $j_G^\epsilon$ by construction.
This is in contrast to the situation for normal coactions,
where the generalized fixed-point algebra coincided with $j_B(B)\subset M(B\rtimes_\epsilon G)$.

Thus, here we have even less than we did in \cite{koqlandstad} ---
not only do we not have a faithful functor,
we do not have a functor at all.
So the problem remains:

\begin{q}
Do the assignments \eqref{obj}--\eqref{mor} give a functor from $\coo^m$ to $\eqaco$?
If not, can the category $\eqaco$ be adjusted so that the assignment
\[
(A,\delta)\mapsto (A\rtimes_\delta G,\what\delta,j_G)
\]
is the object map of a category equivalence
from $\coo^m$ to $\eqaco$?
\end{q}

\begin{rem}
We plan to pursue outer duality for coactions in future work.
One aspect that we will study is the following:
suppose we are given a morphism $(\phi,U)\:(A,\delta)\to (B,\epsilon)$ in the outer category $\coo$ of coactions.
Let $\psi_B\:(B,\epsilon)\to (B^n,\epsilon^n)$ be the normalization.
Then $U^n:=(\psi_B\otimes\id)(U)$ is an $\epsilon^n$-cocycle,
and $(\phi^n,U^n)\:(A^n,\delta^n)\to (B^n,\epsilon^n)$ is a morphism in the subcategory $\coo^n$ of normal coactions.
It should be possible to prove that the functor $\cpc^m\:\coo^m\to \eqacd$ is naturally isomorphic to the composition of
the restricted normalization functor $\nor^m\:\coo^m\to \coo^n$
followed by
$\cpc^n$.
We know from \cite{koqlandstad} that $\cpc^n$ is an equivalence with the category $\eqaco$.
In particular, it should follow that $\cpc^m$ is faithful and/or full if and only if
$\nor^m$
is.
This gives rise to the following questions:
(1)
if $U$ and $V$ are distinct cocycles for a maximal coaction $\delta$,
are the associated $\delta$-cocycles $U^n$ and $V^n$ also distinct,
and
(2)
if $\delta$ and $\epsilon$ are exterior equivalent normal coactions,
are their maximalizations $\delta^m$ and $\epsilon^m$ also exterior equivalent?
\end{rem}


\providecommand{\bysame}{\leavevmode\hbox to3em{\hrulefill}\thinspace}
\providecommand{\MR}{\relax\ifhmode\unskip\space\fi MR }
\providecommand{\MRhref}[2]{%
  \href{http://www.ams.org/mathscinet-getitem?mr=#1}{#2}
}
\providecommand{\href}[2]{#2}

\end{document}